\documentclass[a4paper,10pt]{amsart}
\usepackage[utf8]{inputenc}
\usepackage[T1]{fontenc}


\title[Virtual volumes of strata of meromorphic differentials ]{Virtual volumes of strata of meromorphic differentials with simple poles}
\date{\today}
\author{Adrien Sauvaget}
\address{Institut Mathématique de Toulouse; UMR 5219; Université de Toulouse;
CNRS; UPS, F-31062 Toulouse Cedex 9, France}
\email{adrien.sauvaget@math.cnrs.fr}


\usepackage{amsmath,amssymb,amsthm,mathtools} 
\usepackage{color}
\usepackage{graphicx}
\usepackage[all]{xy}



\usepackage{hyperref}


\usepackage{enumitem}
\setlist{leftmargin=5.5mm}

\usepackage{caption}
\captionsetup{width=14cm}


\theoremstyle{definition}
\newtheorem{definition}{Definition}[section]
\newtheorem{notation}[definition]{Notation}

\newtheorem{remark}[definition]{Remark}

\theoremstyle{plain}

\newtheorem{theorem}[definition]{Theorem}
\newtheorem{proposition}[definition]{Proposition}

\newtheorem{lemma}[definition]{Lemma}
\newtheorem{corollary}[definition]{Corollary}





\def\oM{\overline{\mathcal{M}}}

\def\oH{\overline{\mathcal{H}}}

\def\H{{\mathcal{H}}}

\def\M{{\mathcal{M}}}

\def\F{\mathcal{F}}



\def\RR{\mathbb{R}}
\def\CC{\mathbb{C}}
\def\ZZ{\mathbb{Z}}
\def\NN{\mathbb{N}}
\def\QQ{\mathbb{Q}}
\def\PP{\mathbb{P}}

\def\LL{\mathbb{L}}

\def\a{\underline{a}}
\def\b{\underline{b}}



\def\sskip{\vspace{4pt}}

\begin{document}

\maketitle

\begin{abstract}
We work over strata  of meromorphic differentials with poles of order $1$, and on affine subspaces defined by linear conditions on the residues. We propose a definition of the volume of these objects as the integral of a tautological class on the projectivization of the stratum. By previous work with Chen--Möller--Zagier, this definition agrees with the Masur--Veech volumes in the holomorphic case.  We show that these algebraic constants can be computed by induction on the genus and number of singularities. Besides, for strata with a single zero, we prove that the generating series of these volumes is a solution of an integrable system associated with the PDE: $u_tu_{xx}=u_tu_x+u_t - 1$. 
\end{abstract}

\setcounter{tocdepth}{1}
\tableofcontents

\section{Introduction}

\subsection{Strata of meromorphic differentials}
Let $g, n$ and $m$ be non-negative integers satisfying the stability condition $2g-2+n+m>0$, and let $\mu=(k_1,\ldots,k_n)$ be a vector in $\NN^n$ satisfying
$$
|\mu| \overset{\rm def}{=} \sum_{i=1}^n k_i = 2g-2 +n+ m. 
$$ 
We work over the generalized Hodge bundle $\oH_{g,n,m}\to \oM_{g,n+m}$ of meromorphic differentials with poles of order at most $1$ at the $m$ last markings. Let $\H(\mu,m)\subset \oH_{g,n,m}$ be the locus of differentials $(C,x_1,\ldots,x_{n+m}, \eta)$ such that $C$ is smooth and ${\rm ord}_{x_i}\eta = k_i-1$ for all $i\in \{1,\ldots,n\}$, while ${\rm ord}_{x_i}\eta=-1$ for all $i\in \{n+1,\ldots,n+m\}$. Let $\rho=(\rho_1,\ldots,\rho_\ell)\in \ZZ_{>0}^\ell$ such that $|\rho|=m$, then we denote 
$$
R(\rho)=\left\{(r_{n+1},\ldots,r_{n+m})\in \CC^m, \text{ s.t. }  \sum_{i=\rho_1+\ldots+\rho_{k-1}}^{\rho_1+\ldots+\rho_{k}} r_i = 0   \text{ for all $1\leq k\leq \ell$}\right\}
$$ The {\em stratum of meromorphic differentials with residue conditions}  $\H_g(\mu,\rho)\subset \H(\mu,m)$ is the locus of differentials such that the vector of residues at the $m$ poles  lies in $R(\rho)$. 
This space is a smooth complex orbifold of dimension 
$$
d(\mu,\rho)= |\mu|-\ell(\rho)+1.
$$
Note that it is empty if at least one $\rho_i$ is $1$.  We denote by $\oH_g(\mu,\rho)$ the closure of $\H_g(\mu,\rho)$ in $\H_{g,n}$ and by  $\PP\H_g(\mu,\rho)$ and $\PP\oH_g(\mu,\rho)$ the projectivizations. 

\subsection{Masur--Veech volumes} The stratum $\H_g(\mu,\rho)$ is canonically endowed with a linear structure over $\ZZ$, i.e. there exists an atlas of (period) charts  such that the transition functions are linear functions defined by matrices with coefficients in $\ZZ$. These period coordinates allow for the definition of: an action of  ${\rm GL}(2,\RR)$ (by simultaneous action on periods), and a canonical measure $\widetilde{\nu}_{MV}$, expressed as the euclidean measure in period coordinates. This measure is ${\rm GL}(2,\RR)$-invariant by construction. 

\sskip

For {\em strata of holomorphic differentials}, i.e. when $\rho=()$, the area of a differential is defined as 
$$
{\rm area}(C,\eta) =  \frac{i}{2} \int_{C} \eta\wedge \overline{\eta}
$$
 is finite. By construction, the area is ${\rm SL}(2,\RR)$-invariant, and quadratic with respect to scaling by a factor in $\RR_{>0}$.  We denote by $\H_g(\mu)_{1}$ the subspace of $\H_g(\mu)$ of differentials of area $1$. This measure can be decomposed as $\nu_{MV}\wedge d({\rm area})$. The restriction of $\nu_{MV}$ to $\H_g(\mu)_{1}$ is the {\rm Masur--Veech measure}.  The {\em Masur--Veech volume} of the stratum is defined 
\begin{equation}{\rm Vol}(\mu) =  \nu_{MV}(\H_g(\mu)_{1}).
\end{equation}
By result of Masur and Veech, this quantity is finite~\cite{Mas,Vee}. In fact, the action of ${\rm SL}(2,\RR)$ on $\H_1(\mu)$ is ergodic and preserves $\nu_{MV}$.  Hence, the this measure  plays an important role in the description of the properties of this dynamical system. Besides, this measure can be used to describe the growth of the number of geodiscs on a generic differential in $\H_g(\mu)_{1}$ by work of Veech, Masur, Eskin, and Zorich among others~\cite{Vee,EskMas,EskMasZor}.   
 
 \sskip
 
One cannot expect to extend this picture to the strictly meromorphic setting. Indeed, if $f:\H_g(\mu,\rho)\to \RR_{>0}$ is a ${\rm SL}(2,\RR)$-invariant function  which is quadratic with respect to scaling, then the associated measure has infinite volume. Hence, we propose a purely algebraic definition of this volume via intersection theory. For a reader interested in the dynamics of translation surfaces (with boundaries in the meromorphic case), the natural idea is to  replace $\H_g(\mu,\rho)$ by the boundary locus $\oH_g(\mu,\rho)_0\subset \oH_g(\mu,\rho)$ of differentials with no trivial residues. Indeed, this locus is ${\rm SL}(2,\RR)$ invariant so we can look for non-negative ${\rm SL}(2,\RR)$-invariant distributions concentrated along this locus\footnote{Indeed, a ${\rm SL}(2,\RR)$-invariant distribution on $\H_g(\mu,\rho)$ induces a ${\rm SL}(2,\RR)$-invariant distribution on the space of residues. Such distribution has finite integral only if it is supported at 0.}. The {\em expected} co-dimension  of $\oH_g(\mu,\rho)_0$  is ${\rm dim}(R(\rho))$, but $\oH_g(\mu,\rho)_0$ contains components of smaller co-dimension, hence the motivation for a {\em virtual} volume.

\subsection{Volumes of strata as intersection numbers} We consider the following cohomology classes:
\begin{itemize}
    \item $\xi=c_1(\mathcal{O}(1))\in H^2(\PP\oH_g(\mu,\rho),\QQ)$;
    \item $\psi_i=c_1(\LL_i)\in H^2(\oM_{g,n},\QQ),$ where $\LL_i$ is the co-tangent line at the $i$-th marking for all $i\in \{1,\ldots,n\}$.
\end{itemize}
For all $i\in \{1,\ldots,n\}$, we denote 
\begin{equation}
    a(\mu,\rho)_i=  \int_{\PP\oH_g(\mu,\rho)} \xi^{d(\mu,\rho)-n+1} \prod_{j\neq i} k_j\psi_j
\end{equation}

\begin{proposition}[see Section~\ref{ssec:cohomology}] \label{prop:independence} The value $a(\mu,\rho)_i$ is independent of $i$ and thus is denoted by $a(\mu,\rho)$. 
\end{proposition}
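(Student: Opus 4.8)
The plan is to deduce the statement from a single tautological relation between $\xi$ and the cotangent classes $\psi_i$, coming from the leading coefficient of the universal differential at each of the first $n$ markings. Over $\PP\oH_g(\mu,\rho)$ the universal differential vanishes to order $k_i-1$ at the $i$-th marking, so extracting its leading coefficient there defines a section $s_i$ of $\mathcal{O}(1)\otimes\LL_i^{\otimes k_i}$: the tautological line $\mathcal{O}(-1)$ is spanned by the differential, and its leading coefficient at $x_i$ lives naturally in $\LL_i^{\otimes k_i}=\hom(\mathcal{O}(-1),\LL_i^{\otimes k_i})^\vee$-valued data, i.e.\ $s_i$ is a section of $\mathcal{O}(1)\otimes\LL_i^{\otimes k_i}$. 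On the open stratum $\PP\H_g(\mu,\rho)$ the order of vanishing is exactly $k_i-1$, so $s_i$ is nowhere zero and $\xi=-k_i\psi_i$ holds there for every $i$; globally $s_i$ vanishes along an effective divisor $Z_i$ supported on the boundary $\partial:=\PP\oH_g(\mu,\rho)\setminus\PP\H_g(\mu,\rho)$, yielding in $H^2$ the identity $\xi+k_i\psi_i=[Z_i]$. I would first establish this relation carefully — that the section extends and that $Z_i$ is the expected boundary divisor with the correct multiplicities — which is exactly the type of computation the cohomology section is designed to provide.

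Granting the relation, the reduction is purely formal. For two indices $i,i'$ the integrands defining $a(\mu,\rho)_i$ and $a(\mu,\rho)_{i'}$ differ only in the single omitted factor, so
\begin{equation}
a(\mu,\rho)_i-a(\mu,\rho)_{i'}=\int_{\PP\oH_g(\mu,\rho)}\xi^{d(\mu,\rho)-n+1}\,\bigl(k_{i'}\psi_{i'}-k_i\psi_i\bigr)\prod_{j\neq i,i'}k_j\psi_j .
\end{equation}
By the relation above, $k_{i'}\psi_{i'}-k_i\psi_i=[Z_{i'}]-[Z_i]$, so, writing $\beta:=\xi^{d(\mu,\rho)-n+1}\prod_{j\neq i,i'}k_j\psi_j$ for the class common to both expressions (and manifestly symmetric under $i\leftrightarrow i'$), the difference becomes $\int_{Z_{i'}}\beta-\int_{Z_i}\beta$. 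Thus everything reduces to the equality of the two boundary integrals $\int_{Z_i}\beta=\int_{Z_{i'}}\beta$.

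When $k_i=k_{i'}$ this is immediate: relabelling the two markings is an automorphism of $\PP\oH_g(\mu,\rho)$ that exchanges $Z_i$ and $Z_{i'}$ while fixing $\xi$ and all $\psi_j$ with $j\neq i,i'$, hence fixes $\beta$. The main obstacle is the case $k_i\neq k_{i'}$, where no such symmetry is available and the divisors $Z_i$ and $Z_{i'}$ must be compared directly. The plan here is to use the description of these boundary divisors as gluings of differentials on lower-dimensional strata: restricting $\beta$ to $Z_i$ and evaluating through this gluing turns the integral into a sum of products of intersection numbers on smaller strata, which I would match with the corresponding expansion for $i'$ term by term. I expect this boundary comparison — controlling the restriction of $\xi$ and of the classes $\psi_j$ to each boundary component, and checking that the high power $\xi^{d(\mu,\rho)-n+1}$ forces the two computations to coincide — to be the crux of the argument; it is naturally organized by the same induction on the genus and the number of singularities that governs the rest of the paper.
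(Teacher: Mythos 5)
Your setup coincides with the paper's own first step: the relation $\xi+k_i\psi_i=[Z_i]$ with $[Z_i]$ an effective class supported on the boundary is exactly formula~\eqref{for:ind1} (where $[Z_i]$ is written out as a weighted sum over the simple back-bone graphs in ${\rm SBB}(\mu,\rho)_i$ plus a class $\Delta$ supported on the remaining bi-colored graphs), and the reduction of $a(\mu,\rho)_i-a(\mu,\rho)_{i'}$ to comparing $\int_{Z_i}\beta$ with $\int_{Z_{i'}}\beta$ is precisely how the paper begins. The gap is at the point you yourself flag as the crux. Your plan for $k_i\neq k_{i'}$ --- describing each component of $Z_i$ through the gluing maps and matching the resulting products of intersection numbers term by term against those of $Z_{i'}$, organized by an induction on genus and number of singularities --- would run into the problem that $Z_i$ and $Z_{i'}$ have genuinely different components (for instance those where only the $i$-th marking collides with other zeros), and there is no natural bijection of components under which the contributions could be matched. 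No such matching, and no induction, is needed.

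What closes the argument in the paper is a vanishing statement rather than a comparison: by Lemma~\ref{lem:vanishsbb}(1) (which rests on Lemma~\ref{lem:vanish}, the vanishing of the high Segre classes of $\oH_{g,n,\rho}$ applied on each level-$0$ vertex), after cupping with $\beta=\xi^{d(\mu,\rho)-n+1}\prod_{j\neq i,i'}k_j\psi_j$ every boundary term indexed by a graph outside ${\rm SBB}(\mu,\rho)_i$ --- in particular all of $\Delta$ --- integrates to zero; and a further dimension count on the rational level $-1$ factor $\oM_{-1}$ kills the simple back-bone graphs in ${\rm SBB}(\mu,\rho)_i$ for which the $i'$-th leg sits on a vertex of $V_0$. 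Consequently both $\int_{Z_i}\beta$ and $\int_{Z_{i'}}\beta$ localize onto the \emph{same} set of graphs, namely ${\rm SBB}(\mu,\rho)_i\cap{\rm SBB}(\mu,\rho)_{i'}$ where both markings degenerate onto the level $-1$ vertex, each appearing with the same multiplicity $m(\Gamma,b)/|{\rm Aut}(\Gamma,b)|$ on both sides; the difference therefore vanishes identically, with no case distinction between $k_i=k_{i'}$ and $k_i\neq k_{i'}$. You correctly sense that the high power of $\xi$ is responsible, but without this localization mechanism the proof is not complete as written.
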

\begin{definition}
    We define the {\em virtual volume} (or {\em volume} for short) of $\PP\oH_g(\mu,\rho)$ as
    \begin{equation}
     {\rm Vol}(\mu,\rho)\coloneq -\frac{2(2i\pi)^{2g}}{(|\mu|-1)! \prod_{i=1}^n k_i } a(\mu,\rho).  
    \end{equation}
\end{definition}
In the holomorphic case, the main results of~\cite{CMSZ} shows that this volume is equal to the Masur--Veech volume of the stratum defined in the previous section.  In general, the use of the world ``volume'' is abusive as ${\rm Vol}(\mu,\rho)$ is not defined by a measure. However, we will prove that ${\rm Vol}(\mu,\rho)$ is positive. In fact we will show that it is the volume of a union of boundary components of  $\PP\oH_g(\mu,\rho)$ of co-dimension ${\rm dim}(R(\rho))$, and along which the residues are trivial. 

\begin{remark} There are several choices for this representation of ${\rm Vol}(\mu,\rho)$, and no ``canonical'' one unless  $\rho=(2)$. In this case, ${\rm Vol}(\mu,\rho)$ is the volume of the principal boundary of Eskin-Masur-Zorich for cylinder degenerations~\cite{EskMasZor}. In particular the {\em area Siegel-Veech constant} is given by
$$
c_0(\mu)= \frac{1}{4\pi^2} \frac{{\rm Vol}(\mu,(2))}{{\rm Vol}(\mu,())}.
$$
The interpretation of the next volumes in terms of number of long geodesics would be desirable, but it is not clear how to formulate a conjectural answer to this problem. Indeed, work in progress of Aulicino--Masur--Pan--Su indicates that the large length behaviour is not quadratic \cite{oberwolfachMasur}. Yet, the present definition of volumes could still be used to  compute the asymptotic proportion of geodesics in a given configuration. 
\end{remark}

\begin{remark} We do not include it here, but one should also be able to generalize the present definition to moduli space of meromorphic $k$-differentials with poles of order at most $k$ using the (conjectural) expression of these volumes as intersection numbers in~\cite{SauFlat}.
\end{remark}

\subsection{Combinatorics of virtual volumes} 

We introduce the following polynomials in  $\QQ[t_2,t_3\ldots]$
\begin{eqnarray}
\a(\mu) \coloneqq \sum_{\ell\geq 0}  \sum_{\rho=(\rho_1,\ldots,\rho_{\ell})} \frac{1}{\ell!} \cdot a(\mu,\rho) \cdot \prod_{i=2}^\ell t_{\rho_i} \in \QQ[t_2,t_3\ldots].
\end{eqnarray}
These polynomials can be considered as refinement of Masur--Veech volumes  in $t$-variables. Indeed, the constant coefficient of this polynomial is $a(\mu,())$, i.e. (up to a constant) the classical Masur--Veech volume in a genus determined by $\mu$ (if this genus is a half-integer then this is zero), while the next coefficients are volumes over strata in smaller genera. 

\sskip

Our first theorems show that the induction formulas of~\cite{SauGAFA,CMSZ} for Masur--Veech volumes, hold without modification once we have refined the initial input. To state these results, we use the classical notation
$$
\mathcal{S}(z)=\frac{{\rm sinh}(z/2)}{z/2}= \sum_{g\in \NN/2} b_g z^{2g} = 1 -\frac{1}{24} z^2 +\frac{7}{5760} z^4  -\frac{ 31}{967680} z^6+\ldots 
$$ 
The summation is over non-negative half-integers although $b_{g+1/2}=0$ for an integral $g$. Then we define 
\begin{eqnarray*}
    \b_g&=& [z^{2g}] \, {\rm exp}\left(z^2 t_2 + z^3 t_3+ \ldots \right) \cdot \mathcal{S}(z)^{-1}\\
     &=& b_{g} + \sum_{\rho\neq ()} b_{g-|\rho|/2} \frac{\prod t_{\rho_i}}{|{\rm Aut}(\rho)|}.
\end{eqnarray*}
Here, the notation $[\star]$ stands for ``the coefficient of $\star$ in the following formal series, and the sum is over non-trivial partitions. Here are the first $\b$-polynomials:
$$
\begin{aligned}
\b_0 &= 1     \qquad & \b_{1/2} &= 0 \\
\b_1 &= -\frac{1}{24} + t_2     \qquad & \b_{3/2} &= t_3 \\
\b_2 &= \frac{7}{5760} - \frac{1}{24} t_2+ t_4+ \frac{t_2^2}{2}  \qquad & \b_{5/2} &= -\frac{1}{24} t_3 +  t_5+t_3t_2\\
\b_3 &= -\frac{ 31}{967680} + \frac{7}{5760} t_2 - \frac{1}{24}\left(t_4+\frac{t_2^2}{2}\right) & \ldots
\end{aligned}
$$

\begin{theorem}\label{thm1}
If we set $\mathcal{F}(z)=1+\sum_{k\geq 1} k \a((k)) z^{k+1},$ then for all half-positive integers $g$, the following identity holds
\begin{equation}\label{for:thm1}
\b_{2g} = [z^{2g}] \frac{\F^{2g}}{(2g)!}.
\end{equation}
\end{theorem}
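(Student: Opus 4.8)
The plan is to reduce the identity to the boundary recursion for the single-zero intersection numbers $a((k),\rho)$ and then to check that this recursion is the one already established for the minimal holomorphic strata in \cite{SauGAFA,CMSZ}, the only new feature being that the residue data $\rho$ enters multiplicatively through the variables $t_j$. Since $\mu=(k)$ forces $n=1$, the factor $\prod_{j\neq i}k_j\psi_j$ is empty and $a((k),\rho)$ is simply the top power of $\xi$ on $\PP\oH_g((k),\rho)$. First I would substitute, into this top power, the tautological relation expressing $\xi$ on the projectivized Hodge bundle through $\psi$-classes and boundary divisors of the multi-scale compactification, and iterate until the stratum is fully degenerated into lower pieces. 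This expresses $a((k),\rho)$ as a finite sum over degenerations of products of lower intersection numbers, which is exactly the shape of the induction of \cite{SauGAFA}.

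The second step is to evaluate the local contributions and recognize the generating function. At each node the contribution is, after the Hodge-integral evaluation of \cite{SauGAFA,CMSZ}, a coefficient of $\mathcal{S}(z)=\sinh(z/2)/(z/2)$; summing over all degenerations assembles these local factors into the power $\F^{2g}$, while $1/(2g)!$ records the symmetry of the glued pieces, as in the exponential formula. I would first carry this out with trivial residue conditions, where the statement is the $t=0$ specialization $b_{2g}=[z^{2g}]F_0^{2g}/(2g)!$ with $F_0=1+\sum_k k\,a((k),())\,z^{k+1}$; this is equivalent to the known generating-function formula for the volumes of the minimal strata $\H(2g-2)$, the reduction of the sum over degenerations to the coefficient extraction being a Lagrange--B\"urmann-type diagonal identity treated there. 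The refinement is then obtained by observing that a cluster of $\rho_k$ simple poles tied by a single vanishing-residue condition passes through the recursion as an independent insertion weighted by $t_{\rho_k}$: it alters neither the local $\mathcal{S}$-factor nor the other clusters. Since such clusters are unordered and freely assigned to the pieces, summing over them exponentiates, replacing $\mathcal{S}(z)$ by $\mathcal{S}(z)\exp(\sum_{j\ge 2}t_j z^j)$ --- the series whose coefficients are the polynomials $\b$ --- and upgrading $F_0$ to $\F$. Matching coefficients then yields \eqref{for:thm1}.

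The hard part will be the decoupling claim underlying the refinement: that imposing $\sum_i r_i=0$ on a cluster of poles contributes precisely the scalar weight $t_{\rho_k}$ and does not interact with the rest of the degeneration. Concretely, one must control how the residue subspace $R(\rho)$ and the class $\xi$ restrict to each boundary stratum of $\PP\oH_g((k),\rho)$ in the multi-scale compactification and verify that the vanishing-residue locus splits along the level structure, so that the local contribution at each node is insensitive to $\rho$. I expect to handle this with the residue-space bookkeeping already used for Proposition~\ref{prop:independence}: on each boundary divisor the level structure confines the poles of a given cluster to a single component, so the linear condition factors through that component alone, after which the exponentiation over clusters and the reduction to the holomorphic identity are automatic.
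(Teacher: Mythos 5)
Your plan follows the paper's own route: the paper proves Theorem~\ref{thm1} by extracting the coefficient of $\prod t_{\rho_i}$, rewriting $(2g)!\,b_{g-|\rho|/2}$ as a $\lambda_{g'}\psi_1$-integral over $\PP\oH_{g',1,\rho}$, iterating the boundary induction formula of~\cite{SauGT}, and using a dimension-count vanishing lemma (Lemma~\ref{lem:vanishsbb}) to show that only back-bone graphs with each residue cluster confined to a single $V_0$-vertex contribute --- exactly the decoupling statement you isolate as the hard step. The evaluation of the genus-$0$ and Hodge-integral pieces as in~\cite{SauGAFA,CMSZ} and the exponentiation over unordered clusters (the ${\rm Aut}(\rho)$ bookkeeping) then give~\eqref{for:thm1b}, so your proposal is essentially the paper's argument.
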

This  theorem determines the volumes of all minimal strata (i.e. when $n=1$). Here are the first relations:
$$\begin{aligned}
0! \b_1 &= \a((1))     & 1! \b_{3/2} &= \a((2)) \\
2! \b_2 &= \a((3)) + \frac{1}{2} \a((1))^2  & 3! \b_{5/2} &=  \a((4)) + 2\a((2))\a((1)) \\
4! \b_3 &= \a((5)) + 3\a((3))\a((1)) \!\!\!\! \! &  + \frac{4}{2} \a((2))^2&+ \frac{1}{6} \a((1))^3  \ldots&
\end{aligned}
$$

 To state the next induction formula, we work over another ring of polynomials in infinitely many formal variables $R=\QQ[h_1,h_2\ldots]$. Then, for all $i\in \NN^*$ we set
$$
\mathcal{H}_{{\{i\}}}(z_i)= z_i^{-1} \sum_{k\geq 1} h_k z_i^k \in z_i^{-1}\cdot R[[z_{i}]].$$
For all sets  $I\subset \NN^*$ of size at least two, we  define the formal series $\H_I$ in  $R[[z_{i}]]_{i\in I}$ inductively by
\begin{eqnarray*}
\mathcal{H}_{\{i,j\}} &=& \frac{z_i \H_{\{i\}}'-z_j \H_{\{j\}}'}{ \H_{\{j\}}-\H_{\{i\}}}  -1,  \text{ and } \\
\mathcal{H}_{I} &=&  \frac{1}{2(|I|-1)}\underset{I',I''\neq \emptyset}{\sum_{I'\sqcup I''=I}} D_2(\H_{I'},\H_{I''}), \text{ if $|I|>2$},
\end{eqnarray*}
where $D_2(f,g)=\sum_{k_1,k_2\geq 1} \left([z_1^{k_1}z_2^{k_2}] \H_{\{1,2\}}\right) \frac{\partial f}{\partial k_1}  \frac{\partial g}{\partial k_2}$.
\begin{theorem}\label{thm2}
We define the ring morphism ${\rm ev}\colon R \to \QQ[t_2,t_3,\ldots]$ on generators by $h_k\mapsto 2k \a((k))$. Then for all $\mu$ we have 
$$
\a(\mu) =  \frac{1}{2|\mu|} \cdot {\rm ev}\left( [z_1^{k_1}\ldots z_n^{k_n}] \H_{\{1,\ldots,n\}}\right).
$$
\end{theorem}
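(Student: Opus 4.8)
The plan is to deduce the statement from the induction formula for the intersection numbers $a(\mu,\rho)$ established in~\cite{SauGAFA,CMSZ}, checking that this recursion is unaffected by the imposition of residue conditions and that the passage to the variables $t_\bullet$ is a purely formal repackaging. The geometric input is the boundary structure of the projectivized compactification $\PP\oH_g(\mu,\rho)$: evaluating the top power of $\xi$ together with the $\psi$-classes against the boundary of $\PP\oH_g(\mu,\rho)$ expresses $a(\mu,\rho)$ as a sum over its two-level degenerations. Along such a degeneration a node appears, creating a pair of simple poles on the two branches with opposite residues, and the curve decomposes into two (generically lower-dimensional) strata carrying the zeros indexed by a splitting $I'\sqcup I''$ of $\{1,\ldots,n\}$. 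The key structural fact, proved in~\cite{SauGAFA}, is that the restriction of $\xi$ across the node, together with the splitting of the $\psi$-classes, produces a local gluing factor depending only on the orders of the zero and pole at the node, and not on the ambient residue conditions.

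First I would isolate the one-point input. Under $h_k\mapsto 2k\,\a((k))$ the series $\H_{\{i\}}(z_i)=z_i^{-1}\sum_k h_k z_i^k$ becomes the generating series of the minimal strata already determined by Theorem~\ref{thm1}; the factor $2k$ together with the prefactor $1/(2|\mu|)$ are precisely the normalizations relating the raw integral $a$ to $\a$. Next I would match the two-point kernel: the contribution of a single node, obtained by summing the geometric series governing the order of the zero on each branch, yields exactly the closed form $\H_{\{i,j\}}=(z_i\H_{\{i\}}'-z_j\H_{\{j\}}')/(\H_{\{j\}}-\H_{\{i\}})-1$, with the constant term $-1$ removing the non-degenerate contribution. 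Granting the one- and two-point cases, the general statement follows by induction on $n$: the two-level recursion expresses $a(\mu,\rho)$ as a sum over all splittings $I'\sqcup I''$ glued at the node, which is exactly the bilinear operation $D_2$, and the weight $1/(2(|I|-1))$ records the automorphisms of the glued graph together with the ordering of the two levels.

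The residue refinement is where the new content lies. When a node forms, a residue-vanishing condition---a part of $\rho$---either remains supported on a single branch or is split across the node; by the global residue theorem the induced conditions on the two branches are again of the form $R(\rho')$ and $R(\rho'')$, and their combined contribution is multiplicative. Since $\a(\mu)$ sums over all $\rho$ with weight $\prod t_{\rho_i}/\ell!$, and a product of such conditions corresponds to the product of the associated $t$-monomials, the recursion that computes $a(\mu,())$ computes every coefficient of $\a(\mu)$ simultaneously; this is the precise sense in which the induction formula holds ``without modification once the initial input is refined.'' Concretely, the evaluation ${\rm ev}\colon h_k\mapsto 2k\,\a((k))$ feeds the full $t$-refined minimal strata into the very same kernel.

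The main obstacle will be twofold. First, one must show that only the two-level (single-node) degenerations contribute to the top $\xi$-power, controlling the deeper multi-scale boundary strata and verifying that the restriction of $\xi$ and the gluing of the $\psi$-classes produce exactly the rational kernel above and no higher-order corrections. Second, one must verify that the residue bookkeeping is exact: that the splitting of a residue-vanishing condition across a node is transverse and contributes the correct $t$-monomial with no anomalous factor, so that the multiplicativity underlying the identification with $D_2$ holds at every stage of the induction. These two points---the localization of the boundary contribution to a single node and the transversality of the split residue conditions---constitute the technical heart of the argument.
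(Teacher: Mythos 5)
Your high-level strategy --- feed the $t$-refined minimal strata into the CMSZ recursion and argue that the recursion is insensitive to the residue conditions --- is indeed the strategy of the paper, but the geometric mechanism you describe is not the one that makes it work, and the two points you flag as the ``technical heart'' are misidentified. The relevant degenerations are not two-level, two-branch nodal degenerations producing a pair of simple poles with opposite residues. The paper expands the class $k_2\psi_2$ via the boundary formula~\eqref{for:ind1} (the $\xi$-term being absorbed into an $O(\xi)$ correction killed by Lemma~\ref{lem:vanish}), and the surviving boundary terms are \emph{simple back-bone graphs}: one rational vertex at level $-1$ carrying the legs $1$ and $2$, attached by $r$ edges to $r$ level-$0$ vertices; at each node the level-$(-1)$ differential has a pole of order $b(e)+1\geq 2$ with \emph{zero} residue. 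The resulting recursion~\eqref{for:recursionn} is an $r$-fold star recursion with kernel $\widetilde{h}_{\PP^1}$, a genus-zero $\psi$-integral over residueless differentials; the bilinear $D_2$-form over splittings $I'\sqcup I''$ is not produced directly by the geometry but is the purely combinatorial reorganization of that star recursion carried out in~\cite{CMSZ}, which the paper imports verbatim. Your ``induction on $n$ via splittings glued at a single node'' therefore does not match any step that actually occurs.

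More importantly, your residue bookkeeping is backwards. You propose to verify that a part of $\rho$ ``split across the node'' contributes transversally with the correct $t$-monomial. The actual content of Lemma~\ref{lem:vanishsbb} is a \emph{vanishing} statement: the Segre-class dimension count $\sum_{V_0}\bigl(|\rho(v)|-\ell(\rho(v))\bigr)\leq |\rho|-\ell(\rho)$ shows that any graph in which a group of poles tied by one residue condition is distributed over several level-$0$ vertices contributes zero against the relevant power of $\xi$. Only the $\rho$-simple graphs survive, i.e.\ each part $\rho_k$ sits entirely on one upper vertex, and multiplicativity in the $t$-variables is then automatic --- there is no transversality of split conditions to check. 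Without this vanishing you cannot conclude that the $\rho=()$ recursion ``computes every coefficient of $\a(\mu)$ simultaneously'': split configurations would contribute terms not of the form $\prod_j K_j\,\a((K_j)+\mu^j)$, and the identification with the CMSZ kernel would fail. This vanishing lemma, not a transversality argument, is the genuinely new input of the proof.
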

This second theorem determines all $\a(\mu)$ in terms of the $\a((k))$. The first relations are as follows 
$$\begin{aligned}
\a(2,2) &= 3\a((3)) + \frac{1}{2} \a((1))^2     & \a(3,2) &= 4 \a((4)) + 2\a((2))\a((1)) \\
\a(4,2) &= 5\a((5)) + \frac{8}{2} \a((2))^2  & \a(3,3) &=  5\a((5)) + \frac{8}{2} \a((2))^2 +  \frac{2}{6} \a((1))^3  \\
\a(2,2,2) &= 3\a((3,2)) + \a((1))\a((2,1)) &\ldots \end{aligned}
$$

\begin{remark}
	These two theorems also allow for the description of the large genus asymptotic behavior of these volumes as in the holomorphic case~\cite{EskZor}. Indeed, we claim that for a fixed $\rho$, there exists a constant $C>0$ such that 
	$$
	\left|(2g)^{-|\rho|}{\rm Vol}(\mu,\rho) - \frac{4}{\prod_{i=1}^n k_i}\right|< C
	$$
	for all $\mu$. We do not prove this statement to keep the presentation concise, but it follows from the strategy of~\cite{SauIRMN} (based on the results of \cite{Agg}). 
\end{remark}

\subsection{An integrable hierarchy for strata with a single zero} 

Here we present a different set of relations between $\a$-polynomials that correspond to ``removing a pole'' (instead of diminishing $g$ or $n$). While the previous set of relations generalizes~\cite{SauGAFA,CMSZ} for the constant coefficients, this relations of this section can be seen as generalization of~\cite{BurRoscount,ChePra} in genus 0. Here we restrict ourselves to the case $n=1$ to construct an integrable hierarchy.
\begin{theorem}\label{thm3}
If we set $u(x)= \sum_{k\geq 1}  a((k)) x^{1-k}$, then 
\begin{eqnarray}\label{for:thm31}
\partial_{t_2} u &=& \frac{1}{1-u''+u'}, \text{ and }\\ \label{for:thm32}
 \partial_{t_m} u &=&  \partial_{t_2} u \cdot \partial_{t_{m-1}}(-u'+u) \text{ for all $m\geq 3$}.
\end{eqnarray}
\end{theorem}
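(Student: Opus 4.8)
The plan is to exploit that the times $t_m$ act \emph{linearly} on the series of $\b$-polynomials, and then to transport these trivial flows through the (nonlinear) change of coordinates furnished by Theorem~\ref{thm1}. Since the $t$-dependence of $\b_g$ sits entirely in the factor $\exp(\sum_{m\geq 2}t_m z^m)$, the generating series $\mathcal{B}(z)=\sum_{g}\b_g z^{2g}$ satisfies $\partial_{t_m}\mathcal{B}=z^m\mathcal{B}$, that is $\partial_{t_m}\b_g=\b_{g-m/2}$, and in particular $\partial_{t_2}\b_g=\b_{g-1}$. Thus in the $\b$-coordinates the evolution is a constant shift, and the entire content of Theorem~\ref{thm3} is to read this shift in the coordinates $\a((k))$, equivalently in $u$ or in $\F$.

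First I would repackage Theorem~\ref{thm1} as a single functional relation between $\F$ (equivalently $u$) and $\mathcal{B}$, of Lagrange-inversion type, and record that it is triangular, hence invertible, so that $\{\b_g\}$ and $\{\a((k))\}$ are two coordinate systems on the same space. Differentiating this relation in $t_2$ and substituting $\partial_{t_2}\b_g=\b_{g-1}$, I would solve for $\partial_{t_2}u$: the reciprocal of the Jacobian of the Theorem~\ref{thm1} map is exactly what turns a constant shift into a nonlinear evolution, and I expect this Jacobian to be $1-u''+u'$, which yields the base equation~\eqref{for:thm31}. The precise differential polynomial would be pinned down by matching the first coefficients against the explicit $\b_g$ listed above.

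For $m\geq 3$ I would iterate. Writing $\partial_{t_m}\b_g=\b_{g-m/2}=\partial_{t_{m-1}}\b_{g-1/2}$ exhibits every flow as the previous one followed by a single universal half-shift; transported through Theorem~\ref{thm1} this half-shift becomes a recursion operator, which I expect to act as $f\mapsto \partial_{t_2}u\cdot(1-\partial_x)f$. Since $\partial_{t_{m-1}}(u-u')=(1-\partial_x)\partial_{t_{m-1}}u$, this identifies $\partial_{t_m}u=\partial_{t_2}u\cdot\partial_{t_{m-1}}(u-u')$, i.e.~\eqref{for:thm32}; I would prove it by induction on $m$, the base case being~\eqref{for:thm31} and the inductive step being the intertwining of the transform with the shift.

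The main obstacle is the nonlinear transport of the second step: inverting the relation of Theorem~\ref{thm1} and tracking how the unit shift $\b_g\mapsto\b_{g-1}$ propagates to the $\a((k))$ requires controlling the $\F^{2g}/(2g)!$ (Lagrange) bookkeeping, and showing that the resulting resolvent is precisely $1-u''+u'$ rather than some other differential polynomial in $u$. The geometric meaning of these relations---``removing a pole,'' where the resolvent arises as a geometric sum over chains of rational bubbles carrying the new poles---should guide this computation and provide an independent cross-check, exactly as in the genus-$0$ recursions of~\cite{BurRoscount,ChePra}.
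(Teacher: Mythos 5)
Your proposal is a genuinely different route from the paper's, and as a \emph{reduction} it is sound in principle: the relation of Theorem~\ref{thm1} is triangular in the $\a((k))$, so it determines every polynomial $\a((k))\in\QQ[t_2,t_3,\ldots]$ from the explicitly known $\b$'s, and hence Theorem~\ref{thm3} is logically a consequence of Theorem~\ref{thm1}. The observation $\partial_{t_m}\b_g=\b_{g-m/2}$ is also correct. But the proof has a genuine gap exactly where you locate it yourself: the identification of the transported flow. Differentiating $\b_N=[z^N]\F^N/N!$ in $t_2$ gives, for each $N$, the linear condition $[z^N]\F^{N-1}\partial_{t_2}\F/(N-1)!=[z^{N-2}]\F^{N-2}/(N-2)!$ on the coefficients of $\partial_{t_2}\F$; this system has a unique solution, but nothing in your argument shows that the solution is $\partial_{t_2}u=(1-u''+u')^{-1}$. ``Matching the first coefficients'' can only verify finitely many of the infinitely many identities (one for each $k$ and each $t$-monomial), so it does not constitute a proof; and the claim that the half-shift $S\colon\b_g\mapsto\b_{g-1/2}$, conjugated by the differential of the nonlinear map $u\mapsto(\b_N)$, becomes the \emph{local} operator $f\mapsto\partial_{t_2}u\cdot(1-\partial_x)f$ is likewise asserted (``I expect'') rather than derived --- a priori the conjugated operator need not even be a differential polynomial in $u$. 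Until these two identifications are actually established, the heart of the theorem is missing.

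For comparison, the paper does not pass through Theorem~\ref{thm1} at all. It proves Theorem~\ref{thm3} directly from the cohomological identity~\eqref{for:ind2} expressing $\xi[\PP\oH_g(\mu,\rho)]$ in terms of almost simple back-bone graphs: after multiplying by the appropriate power of $\xi$ and integrating, the vanishing Lemma~\ref{lem:vanishsbb} kills all other contributions, and the surviving boundary terms are evaluated using the closed formula $\widehat{h}_{\PP^1}((k),(p_1,\ldots,p_m))=(m-2)!\prod_{i=1}^{m-2}p_i$ from~\cite{CosSauSch}. The resulting sum over configurations of rational components at level $-1$ is precisely the coefficient-wise form of~\eqref{for:thm31} (case $\rho_\ell=2$) and of~\eqref{for:thm32} (case $\rho_\ell>2$, where one level-$0$ vertex is distinguished by carrying the relaxed residue condition). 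This is exactly the ``geometric sum over chains of rational bubbles'' that you relegate to a cross-check in your last paragraph; in the paper it \emph{is} the proof, and it is what pins down the differential polynomial $1-u''+u'$ that your transport argument leaves undetermined.
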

This result provides a new type of interplay between integrable systems and intersection theory of moduli spaces of curves. Indeed, here the parameter $t_m$ does not account for an insertion at a specific marking, but corresponds to a constraint (the residue condition) at a group of markings. 

\begin{remark} the operators $\partial_x, \partial_{t_2},  \partial_{t_3},\ldots$ commute but they are derivations on the algebra $\QQ[[u,u',u'',\ldots]]$ while the most classical notion of integrable hierarchy requires to work over $\QQ[u',u'',\ldots][[u]]$. However, we obtain a more restrictive statement if we observe that all these operators can be defined formally as derivations on the algebra $
 \QQ[\partial_{t_2}u, u, u',u'',\ldots]$. 
 \end{remark}
 
\subsection{Acknowledgements} I am thankful to A. Zorich for suggesting this problem, and G. Forni for useful conversations on the topic. I am also grateful to A. Wright who explained to me some years ago that ${\rm SL}(2,\RR)$-invariant measures on moduli space of meromorphic differentials are necessarily infinite, thus preventing me from obvious pitfalls.

\section{Moduli spaces of differentials and their compactification}

Here, we recall some of the results about moduli spaces of (meromorphic) differentials and their compactification from~\cite{BCGGM, SauGT}. In this section we chose $g,n,m$ as in the introduction. We also fix a pair $(\mu,\rho)$, but we only assume that $|\mu|\leq 2g-2+n+m$. Then the space $\oH_g(\mu,\rho)$ is the closure of the locus of differentials with order at $x_i$ greater than $k_i$. If $|\mu|=2g-2+n+m$, then we say that $(\mu,\rho)$ is {\em complete}.

\subsection{Back-bone graphs}

We fix a stable pair $(g,n)$, and a stable graph of genus $g$ with $n$ labeled legs
$$
\Gamma=(V,H,g:H\to \NN,
\iota:H\to H,\phi:H\to V,H^\iota\simeq \{1,\ldots, n\}) 
$$
(we refer to~\cite{SauGT} for the definition of stable graph). 
\begin{definition}
    A {\em twist} on $\Gamma$ is a function $b\colon H\to \ZZ$ satisfying the following constraints.
    \begin{itemize}
        \item For all edges $(h,h')$, we have $b(h)+b(h')=0$.
        \item If $(h_1,h_1')$ $(h_2,h_2')$ are edges connecting the vertices $v$ and $v'$, then $b(h_1)\geq 0 \Rightarrow b(h_2)\geq 0$. In which case  we denote $b(v)\geq b(v')$.
        \item The relation $\geq$ is a partial order.
        \item For all vertices $v$, we have 
        $$\sum_{h\mapsto v}b(h)\leq 2g(v)-2+n(v).$$ 
    \end{itemize}
    The pair $(\Gamma,b)$ is called a {\em twisted graph}. We say that a vertex is {\em complete} if the last inequality is an equality. 
\end{definition}
\begin{definition}
    A {\em bi-colored graph} is a twisted graph $(\Gamma,b)$ such that there exists a non-trivial partition of the set of vertices $V=V_0\sqcup V_{-1}$ satisfying: all edges $(h,h')$ are between a vertex $v\in V_0$ and a vertex $v'\in V_{-1}$, and $b(h)>0$. If such a partition exists, one can check that it is unique.
\end{definition}

\begin{notation}
    If $(\Gamma,b)$ is a bi-colored graph, then we denote
    \begin{itemize}
        \item $b(v)\in \ZZ^{n(v)}$ the vector of twists at half-edges incident to $v$,
        \item $b(e)=|b(h)|=|b(h')|$ if $e=(h,h')$ is an edge, and the {\em multiplicity} of $(\Gamma,b)$ is defined as
        $$
        m(\Gamma,b)=\prod_{e} b(e).
        $$
    \end{itemize}
\end{notation}

\begin{definition} A {\em rational back-bone graph} (or simply {\em back-bone graph} in the text) is a bi-colored graph $(\Gamma,b)$ such that:
\begin{itemize} \item the graph $\Gamma$ is of compact type; 
\item there is a unique vertex of in $V_{-1}$ which is of genus 0;
\end{itemize}
We say that it is {\em complete} if each vertex in $V_0$ is complete. 
\end{definition}

\begin{remark}
If $(\Gamma,b)$ is a complete back-bone graph, then $b$ is completely determined by $\Gamma$, or  conversely, the function $b$ determines the genus assignment for vertices. We will often use these facts to re-organize various sums over back-bone graphs.
\end{remark}

\subsection{Segre classes of strata}  We consider the vector bundle $\oH_{g,n,\rho}\subset \oH_{g,n,m}$ of differentials with residues lying in $R(\rho)$. It is a vector bundle of rank $g+m-\ell(\rho)$, and we have the exact sequence
$$
\mathcal{O}\to \oH_{g,n+m}\to \oH_{g,n,\rho} \overset{\rm res}{\to} \CC^{m-\ell(\rho)} \to \mathcal{O},
$$ 
i.e. the kernel of the residue map is the Hodge bundle, the vector bundle of holomorphic differentials. In particular we have 
$$
c_*\left(\oH_{g,n,\rho}\right)=c_*\left(\oH_{g,n,\rho}\right)=1+\lambda_1+\ldots+\lambda_g
$$
\begin{lemma}\label{lem:vanish} If $p\colon \PP\oH_{g,n,\rho}\to \oM_{g,n+m}$ stands for the forgetful morphism of the differential then 
$$p_*\xi^{2g-1+m-\ell(\rho)+k}=\left\{ \begin{array}{cl} 0 & \text{if $k>0$,} \\ (-1)^{g} {\lambda_g}.\end{array} \right.$$ 
\end{lemma}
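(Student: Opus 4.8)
The plan is to recognize $p$ as the bundle projection of a projectivized vector bundle, read off the pushforward of powers of $\xi$ as Segre classes, and then evaluate those Segre classes using Mumford's relation for the Hodge bundle. First I would set $E\coloneqq \oH_{g,n,\rho}$, which is a genuine vector bundle of rank $r=g+m-\ell(\rho)$, so that $p\colon \PP\oH_{g,n,\rho}\to \oM_{g,n+m}$ is the projectivization of $E$ with relative fibre $\PP^{r-1}$ and $\xi=c_1(\mathcal{O}(1))$ its hyperplane class. The projective-bundle pushforward formula then gives $p_*\xi^{\,r-1+j}=s_j(E)$ for the $j$-th Segre class (and $p_*\xi^{\,e}=0$ for $e<r-1$ by dimension). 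Writing the target exponent as $2g-1+m-\ell(\rho)+k=(r-1)+(g+k)$, the bookkeeping yields $j=g+k$, hence $p_*\xi^{\,2g-1+m-\ell(\rho)+k}=s_{g+k}(E)$. It therefore suffices to prove $s_j(E)=0$ for $j>g$ and $s_g(E)=(-1)^g\lambda_g$.

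Next I would compute the total Segre class $s(E)=c(E)^{-1}$. The short exact sequence
$$
0\to \oH_{g,n+m}\to E\overset{\mathrm{res}}{\to} R(\rho)\to 0
$$
exhibits $E$ as an extension of the trivial bundle $R(\rho)\cong \mathcal{O}^{\oplus(m-\ell(\rho))}$ by the Hodge bundle $\EE\coloneqq \oH_{g,n+m}$. By multiplicativity of Chern classes in exact sequences and triviality of $R(\rho)$ we get $c(E)=c(\EE)=1+\lambda_1+\dots+\lambda_g$, and hence $s(E)=s(\EE)=c(\EE)^{-1}$. I would then invoke Mumford's relation $c(\EE)\,c(\EE^\vee)=1$, valid over the compactification $\oM_{g,n+m}$, which gives $s(\EE)=c(\EE^\vee)=\sum_{j=0}^{g}(-1)^j\lambda_j$. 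This total class has no component in degree $>g$, so $s_j(E)=0$ for $j>g$, and its top component is $s_g(E)=(-1)^g\lambda_g$. Substituting into the pushforward formula settles the cases $k>0$ and $k=0$ respectively.

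The only delicate point is pinning down the sign and the degree shift so that the factor $(-1)^g$ comes out correctly: one must use the Segre convention $s(E)c(E)=1$ that matches $p_*\xi^{\,r-1+j}=s_j(E)$ for $\xi=c_1(\mathcal{O}(1))$, after which Mumford's relation automatically supplies the alternating signs. A convenient sanity check is the rank-one case $E=L$, where $p=\mathrm{id}$ and $s_j(L)=(-1)^j c_1(L)^j$, confirming the convention. I would also note explicitly that the whole argument takes place over $\oM_{g,n+m}$, where $E$ is a genuine vector bundle of the stated rank (so the projective-bundle formula applies) and where Mumford's relation holds; both are exactly the inputs guaranteed by the setup of this section.
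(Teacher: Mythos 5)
Your proposal is correct and follows essentially the same route as the paper: identify $p_*\xi^{2g-1+m-\ell(\rho)+k}$ with the Segre class $s_{g+k}(\oH_{g,n,\rho})$, reduce the total Chern class to that of the Hodge bundle via the residue exact sequence, and apply Mumford's relation to get $s_*=1-\lambda_1+\dots+(-1)^g\lambda_g$. You merely spell out the exponent bookkeeping and sign conventions that the paper leaves implicit.
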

\begin{proof}
We recall that $p_*\xi^{2g-1+m-\ell(\rho)+k}$ is the Segre class $s_{g+k}\left(\oH_{g,n,\rho}\right)$. By Mumford's identity~\cite{Mum} we have
$$
s_*\left(\oH_{g,n,\rho}\right) = c_*\left(\oH_{g,n,\rho}\right)^{-1} =c_*\left(\oH_{g,n+m}\right)^{-1} = 1-\lambda_1 +\ldots+ (-1)^g\lambda_g.
$$
\end{proof}
In particular, using the projection formula along $p$, we have the immediate corollary. 
\begin{corollary} For all $\alpha\in H^*(\PP\oH_{g,n,\rho},\QQ)$ we have $\int\alpha\cdot \xi^{2g-1+m-\ell(\rho)+k}=0$ if $k> 0$.
\end{corollary}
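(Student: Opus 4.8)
The plan is to reduce the integral to a pushforward computation that is governed entirely by Lemma~\ref{lem:vanish}. Since $p\colon \PP\oH_{g,n,\rho}\to \oM_{g,n+m}$ is proper, integration over the total space factors through the pushforward, so I would first write
$$
\int \alpha\cdot \xi^{2g-1+m-\ell(\rho)+k} = \int_{\oM_{g,n+m}} p_*\!\left(\alpha\cdot \xi^{2g-1+m-\ell(\rho)+k}\right).
$$
The only thing preventing a direct application of the projection formula is that $\alpha$ is an arbitrary class on $\PP\oH_{g,n,\rho}$, not the pullback of a class from the base.

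To circumvent this I would invoke the projective bundle formula (Leray--Hirsch). The space $\PP\oH_{g,n,\rho}$ is the projectivization of a bundle of rank $g+m-\ell(\rho)$, hence has relative dimension $g+m-\ell(\rho)-1$ over $\oM_{g,n+m}$, and every class decomposes as $\alpha = \sum_{j=0}^{g+m-\ell(\rho)-1} p^*(\alpha_j)\,\xi^{j}$ with $\alpha_j\in H^*(\oM_{g,n+m},\QQ)$. Multiplying by $\xi^{2g-1+m-\ell(\rho)+k}$ and applying the projection formula term by term gives
$$
p_*\!\left(\alpha\cdot \xi^{2g-1+m-\ell(\rho)+k}\right) = \sum_{j} \alpha_j\cdot p_*\,\xi^{\,2g-1+m-\ell(\rho)+(j+k)}.
$$

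The key observation is then purely combinatorial: each exponent on the right has the form $2g-1+m-\ell(\rho)+(j+k)$ with $j+k\geq k>0$, since $j\geq 0$. Thus every shifted exponent still lands in the vanishing regime of Lemma~\ref{lem:vanish}, so each factor $p_*\,\xi^{\,2g-1+m-\ell(\rho)+(j+k)}$ is zero, the whole pushforward vanishes, and with it the integral. The argument is essentially immediate once the lemma is available; the only steps requiring a little care are the Leray--Hirsch decomposition, which is exactly what lets me bypass the fact that $\alpha$ need not be pulled back from $\oM_{g,n+m}$, and the bookkeeping check that shifting the degree by $j\geq 0$ keeps every term strictly above the threshold $g$ for the Segre classes. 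I do not anticipate any genuine obstacle here.
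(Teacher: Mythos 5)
Your proof is correct and follows essentially the same route as the paper, which simply invokes the projection formula along $p$ together with Lemma~\ref{lem:vanish}; your Leray--Hirsch decomposition $\alpha=\sum_j p^*(\alpha_j)\xi^j$ is exactly the (implicit) step needed to reduce a general class $\alpha$ to that case, and the observation that the exponent only increases under this decomposition closes the argument.
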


\subsection{Boundary components}  Let $\Gamma$ be a stable graph of genus $g$ with $n+m$ legs. We recall that $\Gamma$ determines a boundary component of $\oM_{g,n+m}$ i.e. a moduli space and a finite morphism
$$
\zeta_{\Gamma}\colon \oM_{\Gamma}\coloneqq \prod_{v\in V} \oM_{g(v),n(v)} \to \oM_{g,n+m}.
$$

\begin{definition}
A twist $b$ on $\Gamma$ is said {\em compatible with $(\mu,\rho)$} if it satisfies the conditons at legs: $b(i)=k_i$ for $1\leq i\leq n$, while $b(i)=0$ if $n<i\leq n+m$. 
 \end{definition}

We denote by ${\rm Bic}_g(\mu,\rho)$ the set of bi-colored graphs of genus $g$ and compatible with $(\mu,\rho)$. If $\Gamma$ is a twisted graph in ${\rm Bic}_g(\mu,\rho)$, then  each vertex $v$ in $V^0$ defines a pair $(\mu(v),\rho(v))$:  the first one consists of the zero orders (either at legs with indices at most $n$ or at edges to level $-1$ vertices) while the second one is determined by the vector space 
$$
R(\rho) \cap \{(r_{n+1}, \ldots,r_{n+m}), \text{ s.t.  $r_i=0$ if $n+i$ is not incident to $v$}\}.
$$
Then $(\Gamma,b)$ determines a moduli space and a morphism
$$
\zeta_{\Gamma,b} \colon \oH(\Gamma,b,\rho) \coloneqq \oM_{-1} \times \prod_{v\in V^0} \oH_g(\mu(v),\rho(v)) \to \oH_g(\mu,\rho), 
$$
where $\oM_{-1}$ is a subspace of $\prod_{v\in V_{-1}} \oM_{g(v),n(v)}$ described in~\cite[Section~4.2]{SauGT}. As we do not need this definition in general, we  will only give it further for certain back-bone graphs. The morphism $\zeta_{\Gamma,b}$ is defined by associating to a point in $\oH(\Gamma,b,\rho)$ the curve defined by the gluing morphism $\zeta_{\Gamma}$, and the differential that vanishes along components associated with vertices in $V^{-1}$ and prescribed by the value in $\oH_g(\mu(v),\rho(v))$ otherwise. This morphism is finite of degree $|{\rm Aut}(\Gamma,b)|$. 

\begin{definition}
Let $(\Gamma,b)$ be a back-bone graph compatible with $(\mu,\rho)$. We say that $(\Gamma, b)$ is {\em $\rho$-simple} if for all $1\leq k\leq \ell(\rho)$ we have: the legs with label $i$ satisfying $$n+\rho_1+\ldots + \rho_{k-1} < i \leq  n+\ldots + \rho_k$$ are incident to the same vertex in $V_0$.  We say that it is {\em almost $\rho$-simple} if the above condition holds for $1\leq k<\ell(\rho)$, but if $k=\ell(\rho)$  then either $\rho_k=2$ and the last two legs sit on the vertex in $V_{-1}$, or otherwise the last leg sits on the vertex in $V_{-1}$ while the legs with label $i$ satisfying
 $$n+\rho_1+\ldots + \rho_{\ell-1} < i <  n+m$$ are incident to the same vertex in $V_0$.
 \end{definition}

\begin{lemma}\label{lem:vanishsbb} Let us denote $d=2g-1+m-\ell(\rho)$.
 \begin{enumerate} 
	\item If $(\Gamma,b)$ is a back-bone graph compatible with $(\mu,\rho)$, then the class $$p_*(\xi^{d}\zeta_{\Gamma,b\, *}[\PP\oH(\Gamma,b,\rho)])$$ vanishes unless $(\Gamma,b)$ is a simple back-bone graph. 
	
	\item If we assume also that $m>0$, and the last leg sits on a vertex in $V_0$, then $$p_*(\xi^{d-1}\zeta_{\Gamma,b\, *}[\PP\oH(\Gamma,b,\rho)])$$ vanishes unless $(\Gamma,b)$ is an almost simple back-bone graph.
	\end{enumerate}
\end{lemma}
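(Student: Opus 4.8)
The plan is to evaluate the pushforward by the projection formula and then reduce it entirely to the Segre vanishing of Lemma~\ref{lem:vanish}, applied one level-$0$ vertex at a time. First I would use that $p\circ\zeta_{\Gamma,b}$ factors as $\zeta_\Gamma\circ q$, where $q\colon\PP\oH(\Gamma,b,\rho)\to\oM_\Gamma$ forgets the differential. Writing $\xi'=\zeta_{\Gamma,b}^*\xi$, the projection formula gives
$$
p_*\bigl(\xi^{d}\,\zeta_{\Gamma,b\,*}[\PP\oH(\Gamma,b,\rho)]\bigr)=\zeta_{\Gamma\,*}\,q_*\bigl((\xi')^{d}\bigr),
$$
up to the harmless factor $|\aut(\Gamma,b)|$. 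The crucial geometric input is the identification of $\xi'$: on this boundary the differential degenerates to its nonzero restriction on the level-$0$ components and vanishes on the genus-$0$ backbone, so the tautological line restricts to that of the single-$\CC^*$ projectivization of the top-level differentials. Hence $\xi'$ is the restriction of $c_1(\mathcal{O}(1))$ on the projective bundle $P=\PP\bigl(\bigoplus_{v\in V_0}\oH_{g(v),n(v),\rho(v)}\bigr)$ over $\oM_\Gamma$, inside which $\PP\oH(\Gamma,b,\rho)$ sits as a subvariety (the backbone contributes only the inert factor $\oM_{-1}$, carrying no $\xi$). I write $\tilde p\colon P\to\oM_\Gamma$, $r_v=g(v)+m(v)-\ell(\rho(v))$ for the rank of the $v$-th summand, and $R=\sum_{v\in V_0}r_v$.

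Next I would run the Segre calculus for this direct sum. By Lemma~\ref{lem:vanish}, $s_*\bigl(\oH_{g(v),n(v),\rho(v)}\bigr)=1-\lambda^{(v)}_1+\dots+(-1)^{g(v)}\lambda^{(v)}_{g(v)}$, so by the Whitney formula $s_j$ of the direct sum vanishes for $j>\sum_v g(v)=g$, the last equality holding because $\Gamma$ is of compact type. Expanding the subvariety class in the projective-bundle basis, $[\PP\oH(\Gamma,b,\rho)]=\sum_{j\ge0}\tilde p^*(\alpha_j)\,\xi^{j}$, and pushing forward term by term yields
$$
q_*\bigl((\xi')^{d}\bigr)=\sum_{j\ge0}\alpha_j\,s_{\,d+j-(R-1)}\Bigl(\textstyle\bigoplus_{v}\oH_{g(v),n(v),\rho(v)}\Bigr),
$$
which vanishes as soon as $d>(R-1)+g$, since then every index $d+j-(R-1)$ exceeds $g$. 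A pleasant feature is that this estimate is uniform and needs no discussion of which strata are empty.

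It then remains to convert $d>(R-1)+g$ into the stated combinatorics. Let $m_0$ be the number of poles on the backbone, $t_k$ the number of level-$0$ vertices met by the $k$-th residue group, and $b_k$ the number of its poles on the backbone, so that $\sum_v m(v)=m-m_0$, $\sum_v\ell(\rho(v))=\sum_k t_k$ and $m_0=\sum_k b_k$. Substituting into $R$ gives the identity
$$
d-\bigl((R-1)+g\bigr)=m_0-\ell(\rho)+\sum_{v}\ell(\rho(v))=\sum_{k=1}^{\ell(\rho)}\bigl(t_k+b_k-1\bigr).
$$
Each summand is non-negative --- a group is nonempty so $t_k+b_k\ge1$, and a group lying entirely on the backbone has $b_k=\rho_k\ge2$ --- and vanishes precisely when the $k$-th group sits on a single level-$0$ vertex. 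Hence $d>(R-1)+g$ unless every group is concentrated on one level-$0$ vertex, i.e. unless $(\Gamma,b)$ is $\rho$-simple, which proves~(1).

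For~(2) the identical argument with $\xi^{d-1}$ forces $\sum_k(t_k+b_k-1)\le1$ for non-vanishing, that is, at most one ``defective'' group with $t_k+b_k=2$. The remaining and hardest step is to match this single-defect situation with the definition of an almost simple graph. Under the hypotheses $m>0$ and that the last leg lies on a level-$0$ vertex, I would argue that the defect can only be produced by the last group degenerating onto the backbone in one of the two allowed ways, ruling out both a group split between two distinct level-$0$ vertices and any configuration leaving a single residue constrained to vanish (such a size-one residue remnant would force a partition entry equal to $1$, hence an empty stratum $\oH_g(\mu(v),\rho(v))$). This classification, resting on the residue bookkeeping on the genus-$0$ backbone and on the emptiness just mentioned, is where I expect the real work to lie; the Segre-degree estimate above is precisely what reduces it to a finite combinatorial check.
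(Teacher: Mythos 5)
Your part (1) is sound and is in substance the paper's own argument, just packaged differently: the paper decomposes $p_*\bigl(\xi^{D}\zeta_{\Gamma,b\,*}[\PP\oH(\Gamma,b,\rho)]\bigr)$ vertex by vertex as a Segre class of a product of cones and applies Lemma~\ref{lem:vanish} at each $v\in V_0$, whereas you projectivize the direct sum $\bigoplus_{v}\oH_{g(v),n(v),\rho(v)}$ once and invoke multiplicativity of the total Segre class. The two yield the identical threshold $d\le (R-1)+g$ with $R=\sum_v\bigl(g(v)+m(v)-\ell(\rho(v))\bigr)$, and your identity $d-\bigl((R-1)+g\bigr)=\sum_k(t_k+b_k-1)$ is a correct and slightly cleaner form of the paper's inequality $\sum_{V_0}\bigl(|\rho(v)|-\ell(\rho(v))\bigr)\le|\rho|-\ell(\rho)$ together with its equality analysis. (One caveat: the paper actually runs this argument for arbitrary bi-colored graphs, deriving compact type and genus-$0$ lower vertices from the inequality rather than assuming them, since that is the generality in which the lemma is applied; your version assumes the back-bone hypotheses up front, which matches the literal statement but not the later use.)

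Part (2), however, is genuinely incomplete, and the step you defer is not a routine check in the form you set it up. Your deficit bound $\sum_k(t_k+b_k-1)\le 1$ is correct, but your proposed mechanism for excluding a residue group split between two level-$0$ vertices --- that a ``size-one residue remnant'' forces a part equal to $1$ and hence an empty stratum --- only rules out splits of type $1+(\rho_k-1)$. A group with $\rho_k\ge 4$ split as $2+(\rho_k-2)$ between two vertices of $V_0$ produces no part equal to $1$, contributes deficit exactly $1$, and is not almost simple; so the dimension count plus emptiness alone cannot finish the classification. What actually closes this case in the paper is the positional hypothesis on the last leg: the proof assumes the last leg is incident to the vertex in $V_{-1}$ (the ``$V_0$'' in the statement is evidently a typo, since every almost $\rho$-simple graph has its last leg on $V_{-1}$), which forces $b_{\ell}\ge 1$ and hence $t_{\ell}+b_{\ell}-1\ge 1$ already from the last group. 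The whole unit of deficit is therefore consumed by the last group, all other groups must be concentrated on single $V_0$ vertices, and the only deficit-one options for the last group, namely $(t_\ell,b_\ell)=(0,2)$ with $\rho_\ell=2$ or $(t_\ell,b_\ell)=(1,1)$ with the backbone pole being the last leg, are exactly the two almost-simple configurations. Without using that hypothesis your single-defect analysis cannot be completed.
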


\begin{proof}
	Here we decompose the Segre class of a product of cones
	\begin{eqnarray*}
	p_*(\xi^{D}\zeta_{\Gamma,b\, *}[\PP\oH(\Gamma,b,\rho)]) =  \underset{|\underline{d}|=D-|V_0|+1}{\sum_{\underline{d}=(d_v) \in \NN^{V_0} }} \zeta_{\Gamma*}\left( \bigotimes_{v \in V_0} p_{v*}\xi^{d_v} \right)
	\end{eqnarray*}
	where the notation $p_v$ stands for the forgetful morphism $$\PP\oH_{g(v)}(\mu(v),\rho(v))\to \oM_{g(v),n(v)+m(v)}.$$ This class vanishes if $d_v$ is greater than $2g(v)+|\rho(v)|-\ell(\rho(v))-1$ for some $v\in V_0$.  Therefore $p_*(\xi^{D}\zeta_{\Gamma,b\, *}[\PP\oH(\Gamma,b,\rho)])$  vanishes unless 	
	$$
	D\leq |V_0|-1 \sum_{v\in V_0} 2g(v)+|\rho(v)|-\ell(\rho(v))-1.
	$$ 
	First, we remark that $$\sum_{V_0} |\rho(v)|-\ell(\rho(v))\leq |\rho|-\ell(\rho),$$ and the equality holds only if and for all $1\leq k\leq \ell(\rho)$ we have: the legs with label $i$ satisfying $$n+\rho_1+\ldots + \rho_{k-1} < i \leq  n+\ldots + \rho_k$$ are incident to the same vertex in $V_0$. Besides, if we assume that  last leg is incident to a vertex in $V_{-1}$, then 
	\begin{equation}\label{for:ineqres} \sum_{V_0} |\rho(v)|-\ell(\rho(v))\leq |\rho|-\ell(\rho)-1,
	\end{equation} and the equality holds if the above condition holds for $1\leq k<\ell(\rho)$, but if $k=\ell(\rho)$  then either $\rho_k=2$ and the last two legs are incident to vertices in $V_{-1}$, or otherwise the last leg sits on a vertex in $V_{-1}$ while the legs with label $i$ satisfying
 $$n+\rho_1+\ldots + \rho_{\ell-1} < i <  n+m$$ are incident to the same vertex in $V_0$. Then $p_*(\xi^{d}\zeta_{\Gamma,b\, *}[\PP\oH(\Gamma,b,\rho)])$ vanishes unless
 \begin{eqnarray*}
 \left(2g -\sum_{v\in V_0} 2g(v)\right) + \left( |\rho|-\ell(\rho) - \sum_{v\in V_0}  |\rho(v)|-\ell(\rho(v))\right)\leq 0.
 \end{eqnarray*} This inequality imposes that $\sum_{v} 2g(v) = g$, i.e. the graph is of compact type and all vertices are of genus 0, and that we have the equality in~\eqref{for:ineqres}. This second condition implies that the residue condition is trivial and there can be only one vertex in $V_{-1}$, hence $(\Gamma,b)$ is a simple back-bone graph. The same reasoning finishes the proof of the second part of the lemma.	\end{proof}

\subsection{Description of $\oM_{-1}$} We fix a back-bone graph $(\Gamma,b)$ compatible with $(\mu,\rho)$, and we denote by $v_{-1}$ the unique vertex in $V_{-1}$. The restriction $b(v_{-1})$ of the twist $b$ at this vertex defines a vector in $\ZZ^{n(v_{-1})}$. Then we denote by $\H_{0,n_{-1}}(b(v_{-1}))$ the moduli space of marked meromorphic differentials $(C,\eta,x_1,\ldots)$ satisfying: 
\begin{itemize}
\item the poles of $\eta$ are markings;
\item ${\rm ord}_{x_i}(\eta)\geq b(i)-1$ for all $1\leq i\leq n(v_{-1})$. 
\end{itemize}
\sskip

 If $(\Gamma,b)$ is a simple back-bone graph, then we denote by $\H(\Gamma,b,\rho)_{-1}$,  the subspace of $\H_{0,n_{-1}}(b(v_{-1}))$ of differentials with trivial residues at the poles. If $(\Gamma,b)$ is an almost simple back-bone graph, then we denote by $\H(\Gamma,b,\rho)_{-1}$,  the subspace of $\H_{0,n_{-1}}(b(v_{-1}))$ of differentials with no trivial residues at the poles apart: the last two legs if the last entry of $\rho$ is $2$, or the last leg and the half-edge connecting to the vertex carrying the legs with label in $\{n+m-\rho_{\ell},\ldots,n+m-1\}$.

\begin{definition} The space $\oM_{-1}$ is the closure of the image $\PP\H(\Gamma,b,\rho)_{-1}$ in $\oM_{0,n_{-1}}$ under the map forgetting the differential form. Moreover, we denote 
$$
{[}\PP\oH(\Gamma,b,\rho){]}= \left\{
\begin{array}{cl}
\text{P.D. class of $\PP\oH(\Gamma,b,\rho),$} & \text{if ${\rm dim}\oH(\Gamma,b,\rho)={\rm dim} \oH_g(\mu,\rho)$}\\
0, & \text{otherwise.}
\end{array}
\right.
$$
This class sits in $H^*(\oM_{0,n(v_{-1})}) \otimes H^*(\PP\bigotimes_{v\in V_0} \oH_{g(v),n(v),m(v)})$. 
\end{definition}

\subsection{Induction formulas in cohomology}\label{ssec:cohomology}

If we choose $i\in \{1,\ldots,n+m\}$, then we denote respectively by  ${\rm Bic}(\mu,\rho)_i$ and ${\rm SBB}(\mu,\rho)_i$ the sets of bi-colored graphs and complete simple back-bone graphs such that the $i$-th leg is incident to the unique vertex in $V_{-1}$.  
Besides, we denote by ${\rm ASBB}(\mu,\rho)$ the set of complete almost simple  graphs. The following proposition recalls the formulas in cohomology that will be used in the rest of the text. 
\begin{proposition}[\cite{SauGT}, Theorem~5] 
If we assume that  $(\mu,\rho)$ is complete then we have 
\begin{eqnarray}\label{for:ind1}
	(\xi + k_i\psi_i) [\PP\oH_g(\mu,\rho)] &=& \sum_{(\Gamma,b)\in {\rm SBB}(\mu,\rho)_i} \frac{m(\Gamma,b)}{|{\rm Aut}(\Gamma,b)|}\zeta_{\Gamma,b *}[\PP\oH(\Gamma,b,\rho)] + \Delta, \\ \label{for:ind2}
	\xi  [\PP\oH_g(\mu,\rho)] &=& \sum_{(\Gamma,b)\in {\rm ASBB}(\mu,\rho)_i} \frac{m(\Gamma,b)}{|{\rm Aut}(\Gamma,b)|}\zeta_{\Gamma,b *}[\PP\oH(\Gamma,b,\rho)] + \Delta'.
\end{eqnarray}
Here $\Delta$ and $\Delta'$ are cohomology classes supported on the union of boundary components associated with graphs in ${\rm Bic}(\mu,\rho)_i\setminus {\rm SBB}(\mu,\rho)_i$ and ${\rm Bic}(\mu,\rho)\setminus  {\rm ASBB}(\mu,\rho)$ respectively. If $\mu$ is not complete then 
\begin{eqnarray} \label{for:ind3}
	(\xi + k_i\psi_i) [\PP\oH_g(\mu,\rho)] &=& [\PP\oH_g(\mu',\rho)] \\ &&+ \sum_{(\Gamma,b)\in {\rm SBB}(\mu,\rho)_i} \frac{m(\Gamma,b)}{|{\rm Aut}(\Gamma,b)|}\zeta_{\Gamma,b *}[\PP\oH(\Gamma,b,\rho)] + \Delta \end{eqnarray}
	where $\mu'$ is the partition obtained from $\mu$ by adding 1 to the $i$th coordinate, while $\Delta$ is a cohomology class supported on the union of boundary components associated with graphs in ${\rm Bic}(\mu,\rho)_i\setminus {\rm SBB}(\mu,\rho)_i$.
\end{proposition}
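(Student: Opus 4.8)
The plan is to derive all three formulas from a single geometric source: the divisor of the tautological leading-coefficient section of an explicit line bundle on $\PP\oH_g(\mu,\rho)$, whose local behaviour along the boundary is controlled by the incidence variety compactification of \cite{BCGGM, SauGT}. Concretely, I would pull back the universal curve $\oC \to \PP\oH_g(\mu,\rho)$ and let $\eta$ be the tautological differential; since $\mathcal{O}(-1)$ is the tautological line spanned by the differential, $\eta$ is a section of $\omega\otimes\mathcal{O}(1)$, where $\omega$ denotes the relative log-dualizing sheaf. Writing $\sigma_i$ for the $i$-th marking section and using $\sigma_i^*\omega=\LL_i$, the jet filtration shows that the first non-vanishing Taylor coefficient of $\eta$ at $x_i$ defines a section
$$ s_i\in H^0\bigl(\PP\oH_g(\mu,\rho),\,\LL_i^{\otimes k_i}\otimes\mathcal{O}(1)\bigr),\qquad c_1\bigl(\LL_i^{\otimes k_i}\otimes\mathcal{O}(1)\bigr)=k_i\psi_i+\xi, $$
where $k_i\ge 1$ at a zero, while $k_i=0$ at a simple pole (in which case the residue trivialization identifies the coefficient bundle with $\mathcal{O}(1)$, so $s_i$ is simply the residue and $c_1=\xi$). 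On the open stratum the order at $x_i$ equals $k_i-1$ by definition, so $s_i$ is nowhere zero there, and the class $(\xi+k_i\psi_i)[\PP\oH_g(\mu,\rho)]$ is represented by the zero divisor $[Z(s_i)]$.

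For a zero $x_i$ with $1\le i\le n$ I would analyse $Z(s_i)$ according to where the leading coefficient degenerates. If $(\mu,\rho)$ is complete, then $|\mu|$ is maximal and the order at $x_i$ cannot increase in the interior, so $Z(s_i)$ is supported entirely on the boundary; identifying its components gives \eqref{for:ind1}, the point being that $\eta$ degenerates so that $x_i$ lands on the genus-$0$ level-$(-1)$ vertex along which $\eta$ vanishes identically, forcing $s_i$ to vanish. The components of expected codimension are exactly the simple back-bone graphs of ${\rm SBB}(\mu,\rho)_i$, and the remaining degenerations are collected in $\Delta$. If $(\mu,\rho)$ is not complete, then $s_i$ acquires in addition an interior component, namely the sub-stratum on which ${\rm ord}_{x_i}\eta=k_i$; this is precisely $\PP\oH_g(\mu',\rho)$ with $\mu'$ obtained from $\mu$ by raising the $i$-th entry, producing the extra term of \eqref{for:ind3} while the boundary analysis is unchanged.

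For the pole case I would take $x_i$ to be the last marking, so that $k_i=0$ and $s_i$ is the residue section with $c_1=\xi$. Its vanishing locus is where the residue at $x_i$ tends to zero; since a simple pole of vanishing residue is a regular point, in the compactification this forces $x_i$ to migrate to the level-$(-1)$ vertex. Here the residue condition $R(\rho)$ is essential: because the residues in the last block of $\rho$ are constrained to sum to zero, the degeneration of the last pole is accompanied either by the degeneration of the whole block (when $\rho_\ell=2$) or by the remaining poles of the block staying on a level-$0$ vertex, which is exactly the defining condition of an \emph{almost} simple back-bone graph. This yields \eqref{for:ind2}, with $\Delta'$ supported on the complementary non-simple locus; the hypothesis that the last leg sits on a vertex of $V_0$, as in Lemma~\ref{lem:vanishsbb}(2), is what singles out ${\rm ASBB}(\mu,\rho)$ as the list of surviving graphs.

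The main obstacle is the local computation of the multiplicity of $Z(s_i)$ along each boundary component, together with the proof that only (almost) simple back-bone graphs appear with an explicit coefficient. For this I would use the plumbing and scaling coordinates of the multi-scale compactification: near a bi-colored degeneration $(\Gamma,b)$ the tautological differential on each level-$0$ component is rescaled against the level-$(-1)$ component, and the order of vanishing of the leading coefficient along the corresponding divisor is governed by the edge twists $b(e)$. Tracking these scalings should produce the multiplicity $m(\Gamma,b)=\prod_e b(e)$ along a simple back-bone graph, while the automorphism factor $1/|{\rm Aut}(\Gamma,b)|$ arises from the degree of $\zeta_{\Gamma,b}$; configurations in which $x_i$ does not reach the genus-$0$ level-$(-1)$ vertex, or in which the residue blocks split incorrectly, either fail to make $s_i$ degenerate or contribute only in higher codimension, and are absorbed into $\Delta$ and $\Delta'$. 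Carrying out this bookkeeping uniformly over all of ${\rm Bic}(\mu,\rho)$, and checking that the explicit terms indeed have the expected codimension one, is the technical heart of the argument, and it is exactly the point where the delicate boundary geometry of \cite{BCGGM, SauGT} enters.
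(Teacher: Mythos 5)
The paper does not prove this proposition: it is quoted verbatim from \cite{SauGT}, Theorem~5, so there is no internal proof to compare against. Your sketch does reconstruct the strategy of that reference correctly — the class $\xi+k_i\psi_i$ is realized as $c_1$ of the jet-evaluation bundle $\LL_i^{\otimes k_i}\otimes\mathcal{O}(1)$ carrying the leading Taylor coefficient of the tautological differential, the interior component of its zero divisor gives the term $[\PP\oH_g(\mu',\rho)]$ in \eqref{for:ind3} when $\mu$ is incomplete, and the boundary components with their twist-multiplicities $m(\Gamma,b)=\prod_e b(e)$ give the back-bone contributions. Treating \eqref{for:ind2} via the residue section of $\mathcal{O}(1)$ at the last pole is also the right mechanism for why \emph{almost} simple back-bone graphs appear.

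That said, as a proof the proposal has two genuine gaps, both of which you acknowledge only in passing. First, the claim that $\Delta$ and $\Delta'$ are supported on boundary components indexed by \emph{bi-colored} graphs (and not on arbitrary boundary strata of $\oM_{g,n+m}$, nor on interior loci) is not automatic: the zero divisor $Z(s_i)$ could a priori have components over any stable degeneration, and ruling out non--bi-colored contributions requires the structure theory of the boundary of the incidence variety compactification from \cite{BCGGM,SauGT} (the differential must vanish identically on the component carrying $x_i$, which forces a two-level structure). For \eqref{for:ind2} there is the analogous issue that the residue section vanishes on interior loci where the pole order drops on a smooth curve; one must show these are of codimension at least two or are absorbed into the boundary terms. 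Second, the multiplicity computation $m(\Gamma,b)$ along each simple back-bone divisor, which is the entire content of the coefficient in \eqref{for:ind1}, is deferred to ``tracking the scalings'' in plumbing coordinates; this is precisely the hard local-analytic argument of \cite{SauGT} and cannot be taken as done. Since the proposition is imported here as a black box, citing it is legitimate; but your text should not be read as an independent proof without filling in these two points.
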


We first use these formulas to complete the proof of Proposition~\ref{prop:independence} in the introduction
\begin{proof} 
We write $(m_1\psi_1-m_2\psi_2)[\PP\oH_g(\mu,\rho)]$ using the differences between~\eqref{for:ind1} for $i=1$ and $2$: 
  \begin{eqnarray*}(m_1\psi_1-m_2\psi_2) [\PP\oH_g(\mu,\rho)] &=& \!\!\!\!\! \sum_{(\Gamma,b)\in {\rm SBB}(\mu,\rho)_1} \frac{m(\Gamma,b)}{|{\rm Aut}(\Gamma,b)|}\zeta_{\Gamma,b *}[\PP\oH(\Gamma,b,\rho)]  \\ &&- \!\!\!\!\!\!\!\!\!\!\sum_{(\Gamma,b)\in {\rm SBB}(\mu,\rho)_2} \frac{m(\Gamma,b)}{|{\rm Aut}(\Gamma,b)|}\zeta_{\Gamma,b *}[\PP\oH(\Gamma,b,\rho)] + \Delta.
  \end{eqnarray*}
  We multiply this identity by $\alpha=\xi^{d(\mu,\rho)-n+1}\prod_{i=3}^n k_i\psi_i$ and integrate. The left-hand-side is given by $a(\mu,\rho)_2-a(\mu,\rho)_1$. For the right-hand-side, we use the first part of Lemma~\ref{lem:vanishsbb}, we see that all graphs outside $ {\rm SBB}(\mu,\rho)_1\cup$ and $ {\rm SBB}(\mu,\rho)_2$ have trivial contribution. Moreover, if $(\Gamma,b)$ is a graph in ${\rm SBB}(\mu,\rho)_1\setminus {\rm SBB}(\mu,\rho)_2$ then the second leg is incident to a vertex in $V_0$ and the integral $\alpha$ vanishes for dimension reasons. 
\end{proof}

\section{Induction on the genus for minimal strata}

 In this section we assume that $n=1$. In order to establish Theorem~\ref{thm1} we first reformulate it by extracting a coefficient in $t$-variables in~\eqref{for:thm1}. We fix a half-integer $g$ and $\rho$, and we write $g'=g-|\rho|/2$. We assume that $\rho$ is in the form of a partition i.e. $\rho_1\geq \rho_2\ldots\geq \rho_\ell$. 
Then  we extract the coefficient of $t_{\rho_1}\ldots t_{\rho_\ell}$ of~\eqref{for:thm1}. It gives the equality
\begin{equation}\label{for:thm1b}
\frac{(2g)! b_{g'}}{|{\rm Aut}({\rho})|} = \sum_{r\geq 1}   \binom{2g}{r}  \underset{\rho^1+\ldots + \rho^r ={\rho}}{\sum_{k_1+\ldots+k_r=2g-r}}\prod_{i=1}^r \frac{a((k_i) ,\rho^{i})}{|{\rm Aut}(\rho^{i})|}.
\end{equation}
In this expression, the sum is over all $r$-uple of possibly trivial partitions with concatenation (as partitions) equal to ${\rho}$, while ${\rm Aut}({\rho})$ is the automorphism group of the partition.  This is the equality that we are going to prove in this section. we need to remark that 
\begin{eqnarray}
(2g)! b_{g'}&=& 2g(2g-1) \int_{\oM_{g',1+|\rho|}}\!\!\!\!\!\! (-1)^{g'} \lambda_{g'} \prod_{j=1}^{2g-2} j \psi_1 \nonumber \\ \label{for:b} &=& 2g(2g-1) \int_{\PP\oH_{g',1,\rho}} \!\!\!\!\!\! \xi^{2g'-1+|\rho|-\ell} \prod_{j=1}^{2g-2} j \psi_1.
\end{eqnarray}
Besides we remark that, applying the identity~\eqref{for:ind3} $(2g-2)$ times, we obtain an expression of the form
$$
\prod_{j=1}^{2g-1} j \psi_1 = [\PP\oH_{g'}((2g-1),\rho)] + \text{boundary contribution} + O(\xi)
$$
where $O(\xi)=\xi\cdot C$ for  some class $C$ in $H^*(\PP\oH_{g',1,\rho})$. So, once we multiply this expression by $\xi^{2g-1-\ell}$ and integrate, the class $\xi\cdot C$ is sent to 0 by Lemma~\ref{lem:vanishsbb}. Therefore we obtain 
 $$
 (2g-2)! b_{g'} = a((2g-1),\rho) + \text{boundary contribution} 
 $$
and we are left to analyze this boundary contribution. To do so we apply~\eqref{for:ind3} in the case $n=1$ and $\mu=(k)$ for $k<2g-1$ to obtain
\begin{eqnarray*}
k\psi_1[\PP\oH_{g'}((k),\rho)] &=&[\PP\oH_{g'}((k+1),\rho)] \\
&&+\!\!\!\!\!\! \sum_{(\Gamma,b)\in {\rm SBB}((k),\rho)_1}  \frac{m(\Gamma,b)}{|{\rm Aut}(\Gamma,b)|}\zeta_{\Gamma,b *}[\PP\oH(\Gamma,b,\rho)] + \Delta + O(\xi),
\end{eqnarray*}
where the class $\Delta$ is supported on the boundary components of $[\PP\oH_{g'}((k),\rho)]$ defined by graphs in ${\rm Bic}((k),\rho)_1\setminus {\rm SBB}((k),\rho)_1$. If we multiply this expression by $\xi^{2g-1-\ell}\psi_1^{2g-2-k}$ and integrate then the contribution of $\Delta$ and $O(\xi)$ are trivial by Lemma~\ref{lem:vanish} and Lemma~\ref{lem:vanishsbb} respectively. 

\sskip

In order to obtain the equality~\eqref{for:thm1b}, we consider the action of ${\rm Aut}(\rho)$ on  $\oM_{g',1+|\rho|}$ by permutation of markings as follows: if $\sigma\in {\rm Aut}(\rho)\subset S_\ell$, then for all $1\leq i\leq \ell$ and $1\leq j\leq \rho_{i}$ we set 
\begin{equation*}
\sigma\left(1+\left(\sum_{1\leq i'<i} \rho_{i'}\right)+j\right) = 1+\left(\sum_{1\leq i'<\sigma{i}} \rho_{\sigma^{-1}(i')}\right)+j
\end{equation*} 
In other words, the action of ${\rm Aut}(\rho)$ permutes sets of markings tied by residue condition  without changing the order within this set. For all $k$, the locus $\PP\oH_{g'}(k,\rho)$ is invariant under the action of ${\rm Aut}(\rho)$, so the identity
\begin{eqnarray}
\nonumber {(2g-2)!}{\psi_1^{2g-2}} &=& [\PP\oH_{g'}((2g-1),\rho)] \\ &&+ \sum_{k=1}^{2g-2} \sum_{(\Gamma,b)\in {\rm SBB}((k),\rho)_1}  \frac{m(\Gamma,b)}{|{\rm Aut}(\Gamma,b)|}\zeta_{\Gamma,b *}[\PP\oH(\Gamma,b,\rho)] + \Delta + O(\xi),
\end{eqnarray}
is valid in $H^*(\PP\oH_{g',1+|\rho|}/{\rm Aut}(\rho))$. 
\sskip

Next, for a fixed graph $(\Gamma,b)$ in ${\rm SBB}((k),\rho)_1$,  we will compute the integral 
\begin{eqnarray*}
\int_{\PP\oH(\Gamma,b,\rho)}\!\!\!\!\!\! \xi^{2g-1-\ell}\psi_1^{2g-2-k} &=& \left(  \int_{\oM_{-1}}\!\!\!\!\!\! \psi_1^{2g-2-k}\right)\times \int_{\PP\prod_{v\in V_0} \oH_{g(v)}(\mu(v),\rho(v))} \!\!\!\!\!\!\!\!\!\!\!\!\!\!\! \xi^{2g-1-\ell}. \end{eqnarray*}
By the same argument as in~\cite[Lemma~3.7]{SauGAFA} we have 
$$
\int_{\oM_{-1}} \psi_1^{2g-2-k} = \left\{ \begin{array}{cl} 1 & \text{if $2g-k=|V_0|$},\\ 0 & \text{otherwise.} \end{array}\right.
$$
On the other hand, the definition of ${\rm SBB}((k),\rho)_1$ implies that the vector  $\mu(v)$ is equal to $(2g(v)+|\rho(v)|-1)$  for all $v\in V_0$. Therefore, we obtain the following expression
$$
 \int_{\PP\prod_{v\in V_0} \oH_{g(v)}(\mu(v),\rho(v))} \xi^{2g-1-\ell} = \prod_{v\in V_0} a((2g(v)+|\rho(v)|-1),\rho(v)).
$$
Putting everything together we obtain the following relation
\begin{eqnarray*}
\frac{(2g)!}{|{\rm Aut}(\rho)|} b_{g'} &=& 2g (2g-1)a((2g-1),\rho) \\ &&+ \sum_{k=1}^{2g-2}  \sum_{(\Gamma,b) \in {\rm SBB}((k),\rho)_1} \frac{2g!}{k!}  \frac{m(\Gamma,b)}{|{\rm Aut}(\Gamma,b)||{\rm Aut}(\rho)|}   \\ && \,\,\,\,\,\,\,\,\,\,\,\,\,\,\,\,\,\,\,\,\,\,\,\,\,\,\,\,\,\,\,\,\,\,\,\,\,\,\,\,\,\,\,\,\,\,\,\,\,\,\,\,\,\,\,\,\,\,\,\,\,\,\,\,\,\,\,\,\,\,\,\,\,\,\,\,\,\,\,\,\int_{\PP\oH(\Gamma,b,\rho)} \!\!\!\!\!\! \xi^{2g-1-\ell}\psi_1^{2g-2-k}  \\
&=& 2g (2g-1) a((2g-1),\rho) \\ &&+ \sum_{k=1}^{2g-2}  \underset{|V_0|=2g-k}{\sum_{(\Gamma,b) \in \widetilde{\rm SBB}((k),\rho)_1}} \frac{(2g)!}{k!}   \frac{1}{|\widetilde{\rm Aut}(\Gamma,b)|} \prod_{v\in V_0} \frac{k(v) a((k(v)),\rho(v))}{|{\rm Aut}(\rho(v)|}.
\end{eqnarray*}
In this expression $k(v)=2g(v)+|\rho(v)|-1$. Besides,  $\widetilde{\rm Aut}(\Gamma,b)$ is the extension of ${\rm Aut}(\Gamma,b)$ obtained by allowing permutation of the markings in ${\rm Aut}(\rho)$, while $\widetilde{\rm SBB}((k),\rho)_1$ is the set of isomorphism classes of such stable graphs up to the action of ${\rm Aut}(\rho)$. Then this formula can be rewritten as
\begin{eqnarray*}
\frac{(2g)!}{|{\rm Aut}(\rho)|} b_{g'} &=& 2g (2g-1)a((2g-1),\rho)\\&& + \sum_{k=1}^{2g-2}\frac{(2g)!}{k!(2g-k)!}  \underset{\rho^1+\ldots+\rho^{2g-k}=\rho}{\sum_{k_1+\ldots+k_{2g-k} = k }}    \prod_{i=1}^{2g-k} k_i \frac{a((k_i),\rho^i)}{|{\rm Aut}(\rho^i)|}\\
&=&  \sum_{r\geq 1}   \binom{2g}{r}  \underset{\rho^1+\ldots + \rho^r ={\rho}}{\sum_{k_1+\ldots+k_r=2g-r}}\prod_{i=1}^r \frac{a((k_i) ,\rho^{i})}{|{\rm Aut}(\rho^{i})|}
\end{eqnarray*}
which is the desired~\eqref{for:thm1b} (in the last sum we have re-ordered the summation with $r=2g-k$).

\section{Volume recursion for the numbers of zeros}

 In this section, we establish Theorem~\ref{thm2} by following the strategy of~\cite{CMSZ}. We fix a complete pair $(\mu,\rho)$ and we assume that $n\geq 2$. We will give a special role to the legs $1$ and $2$. First we use the identity~\eqref{for:ind1} for $i=2$
  \begin{equation}
	k_2\psi_2 [\PP\oH_g(\mu,\rho)] = \sum_{(\Gamma,b)\in {\rm SBB}(\mu,\rho)_{2}}  \frac{m(\Gamma,b)}{|{\rm Aut}(\Gamma,b)|} \zeta_{\Gamma,b*} [\PP\oH(\Gamma,b,\rho)] + \Delta + O(\xi) 
\end{equation}
where $\Delta$ is a sum of contributions supported on strata associated with graphs in $ {\rm Bic}(\mu,\rho)_{2}\setminus  {\rm SBB}(\mu,\rho)_{2}$ and $O(\xi)$ is of the form $\xi C$ for $C\in H^*(\PP\oH_{g,n,\rho})$. We multiply this identity by $\xi^{d(\mu,\rho)-n}\prod_{i=3}^n k_i\psi_i$ and integrate. In the right-hand-side, both the contribution of $\Delta$ and $O(\xi)$ vanish by Lemma~\ref{lem:vanish} and Lemma~\ref{lem:vanishsbb}. We now analyze the contribution of back-bone graphs:
\begin{eqnarray*}
	\int_{\PP\oH(\Gamma,b,\rho)}\!\!\!\!\!\! \xi^{d(\mu,\rho)-n} \prod_{i=3}^n \psi_i &=& \left(\int_{\oM_{-1}} \underset{i\geq 3}{\prod_{i\mapsto V_{-1}}} k_i\psi_i\right) \\ && \,\,\,\,\,\,\,\,\,\times \int_{\PP\prod_{v\in V_0} \oH_{g(v)}(\mu(v),\rho(v))} \!\!\!\!\!\!\!\!\xi^{d(\mu,\rho)-n} \underset{i\geq 3}{\prod_{i\mapsto  V_0}} k_i \psi_i
\end{eqnarray*}
For dimension reasons, the first term in this product vanishes unless the first leg is incident to the vertex in $V_{-1}$. Then this integral is equal to 
$$
\widetilde{h}_{\PP^1}\big((k_i)_{i\mapsto V_{-1}}, (b(e))_{e\in E(\Gamma)}\big) \times \prod_{v\in V_0} a(\mu(v),\rho(v))  
$$
where the function $\widetilde{h}_{\PP^1}$ is close to the the function $h_{\PP^1}$ from~\cite{CMSZ}\footnote{We have $\widetilde{h}_{\PP^1}=h_{\PP^1} \times \prod_{i\geq 3} k_i$} and defined as follows. If $\mu[0]=(k_1,\ldots,k_n),$ and $\mu[\infty]=(p_1,\ldots,p_m))$ are vectors of positive integers satisfying $$|\mu[0]|-|\mu[\infty]|=-2+\ell(\mu[0])+\ell(\mu[\infty]),$$ and such that $\ell(\mu[0])\geq 2$, then $\overline{H}_{\PP^1}(\mu[0],\mu[\infty])$ is the compactification of the space of differentials with no residues on a genus 0 surface with zeros of orders $k_1-1,k_2-1,\ldots$ and poles of order $p_i+1,\ldots,$ and 
$$
\widetilde{h}_{\PP^1}(\mu[0],\mu[\infty])=\int_{\PP\overline{H}_{\PP^1}(\mu[0],\mu[\infty])} \prod_{i=3}^n  k_i\psi_i.$$
Then with this function we obtain the following recursion
\begin{equation}\label{for:recursionn}
	a(\mu,\rho)= \sum_{(\Gamma,b)\in {\rm SBB}(\mu,\rho)_{1,2}} \frac{m(\Gamma,b)}{|{\rm Aut}(\Gamma,b)|}  h_{\PP^1}((k_i)_{i\mapsto V_{-1}}, (b(e))_{e\in E}) \prod_{v\in V_0} a(\mu(v),\rho(v)),
\end{equation}
where ${\rm SBB}(\mu,\rho)_{1,2}$ is the set of simple back-bone graphs such that the legs 1 and 2 are incident to the vertex in $V_{-1}$. Then we can rewrite this equality as follows
\begin{eqnarray}
	\nonumber \frac{a(\mu,\rho)}{|{\rm Aut}(\rho)|}= \sum_{r\geq 1} \!\!\!\!\!\!\!\!\! \underset{K_1+\ldots+K_r=|\mu_{L_{-1}}|+k_1+k_2-|L_{-1}|-r }{\sum_{L_{-1}\sqcup L_1 \sqcup L_r=\{3,\ldots,n\}}} && \!\!\!\!\!\!\!\!\!\!\!\!\!\!\!\!\!\!\frac{\widetilde{h}_{\PP^1}((k_1,k_2)+\mu^{-1},(K_1,\ldots,K_r))}{r!} \\ &&\!\!\!\!\!\!\!\! \!\!\!\!\!\!\!\!\times \left(\sum_{\rho^1+\ldots + \rho^r =\rho} \prod_{j=1}^r \frac{K_j a((K_j)+\mu^j,\rho^j)}{|{\rm Aut}(\rho^j)|}\right).
\end{eqnarray}
In this equality $\mu^j$ is the vector of positive integers obtained from $\mu$ by restriction to the subset $L_j$ and the notation $(K_j)+\mu^j$ stands for the concatenation of vectors, while the sum is over choices of $r$ partitions that concatenate as partitions to $\rho$. Now we can use this identity to evaluate $\a(\mu)=\sum_{\rho} \frac{a(\mu,\rho)}{|{\rm Aut}(\rho)|} \prod t_{\rho_i}$. We obtain
\begin{eqnarray}
	\nonumber \a(\mu)= \sum_{r\geq 1} \!\!\!\!\!\!\!\!\! \underset{K_1+\ldots+K_r=|\mu^{{-1}}|+k_1+k_2-|L_{-1}|-r }{\sum_{L_{-1}\sqcup L_1 \sqcup L_r=\{3,\ldots,n\}}} && \!\!\!\!\!\!\!\!\!\!\!\!\!\!\!\!\!\!\frac{\widetilde{h}_{\PP^1}((k_1,k_2)+\mu^{-1},(K_1,\ldots,K_r))}{r!} \\ &&\!\!\!\!\!\!\!\! \!\!\!\!\!\!\!\!\times \left(\prod_{j=1}^r K_j \a((K_j)+\mu^j,\rho^j)\right).
\end{eqnarray}

This recursion formula is equivalent to~\cite{CMSZ}[Lemma 3.13], with the only difference that we replaced the rational numbers $a(\mu,\emptyset)$ by the full polynomials $\a(\mu)$ in $t$-variables. However, all subsequent re-organization of this recursion formulas are valid without modification, in particular the one stated in the intro as Theorem~\ref{thm2}.

\section{Integrable hierarchy for minimal strata}

 Here we prove the last recursive formula provided by Theorem~\ref{thm3}. To do so we fix some vectors $\mu$ and $\rho$ as in the introduction and we assume that $\ell(\rho)\geq 1$. Besides, we write $\rho'=(\rho_1,\ldots,\ldots,\rho_{\ell-1})$, and for simplicity we assume that we write it as a partition, i.e. $\rho_1\geq \ldots\geq \rho_{\ell-1}$ (but we do not make any assumption on the last entry $\rho_\ell$). 
 \sskip
 
The first step is to multiply~\eqref{for:ind2} by $\xi^{d(\mu,\rho)-1-n}\prod_{i= 2}^n$ and integrate to obtain
\begin{eqnarray*}
a(\mu,\rho) &=& \sum_{(\Gamma,b)\in {\rm ASBB}(\mu,\rho)} \frac{m(\Gamma,b)}{|{\rm Aut}(\Gamma,b)|} \int_{\PP\oH(\Gamma,b,\rho)} \xi^{d(\mu,\rho)-1-n}\prod_{i\geq 3}^n k_i\psi_i.
\end{eqnarray*}
Here we have used the fact that the contribution of bi-colored graphs which are not in ${\rm ASBB}(\mu,\rho)$ is trivial by Lemma~\ref{lem:vanishsbb}. Besides, the contribution in $ {\rm ASBB}(\mu,\rho)$ vanishes unless the first leg is incident to the vertex in $V_{-1}$ for dimension reasons. Therefore, we denote by ${\rm ASBB}(\mu,\rho)_1\subset {\rm ASBB}(\mu,\rho)$ the set of graphs satisfying this constraint.  If we analyze further the contribution of graphs in  ${\rm ASBB}(\mu,\rho)_1$ we obtain
\begin{equation}\label{for:indres}
a(\mu,\rho) = \!\!\!\!\!\!\!\!\!\!\sum_{(\Gamma,b)\in {\rm ASBB}(\mu,\rho)_1} \!\!\!\!\!\frac{m(\Gamma,b)}{|{\rm Aut}(\Gamma,b)|} \widehat{h}_{\PP^1}((k_i)_{i\mapsto V_{-1}}, \mu[\infty]) \prod_{v\in V_0} a(\mu(v),\rho(v)).
\end{equation}
In this expression, $\mu[\infty]$ is either $(b(e))_{e\in E}+(0,0)$ if $\rho_\ell=2$, and otherwise it is given by $$(b(e))_{e\in E\setminus \{e_0\}}+(b(e_0),0)$$ where $e_0$ is the edge connecting the vertex in $V_{-1}$ to the vertex in $V_0$ carrying the legs between $n+m-\rho_\ell+1$ and $n+m-1$.  This expression is similar to~\eqref{for:recursionn}. Apart from the set of indexation of the sum,  the difference is the replacement of $\widetilde{h}_{\PP^1}$ by $\widehat{h}_{\PP^1}$. The definition of this last function is 
\begin{equation}\widehat{h}_{\PP^1}((k_1,\ldots,k_n),(p_1,\ldots,p_m))=\int_{\PP\widehat{H}_{\PP^1}(\mu[0],\mu[\infty])} \prod_{i=2}^n k_i\psi_i,
\end{equation}
where $\widehat{H}_{\PP^1}(\mu[0],\mu[\infty])$ is the space of differentials on a genus 0 curve with zeros and poles $\mu[0]$ and $\mu[\infty]$ and with vanishing residues appart from the residues at the  last last two poles. 
\begin{remark} Formula~\eqref{for:indres} expresses the Masur--Veech volume in terms of the divisor indexed by $(\Gamma,b) \in {\rm ASBB}(\mu,\rho)_1$. We see that this choice depends on:
\begin{itemize}
\item  the choice of an element $i\in \{1,\ldots,n\}$ (here we have implicitly fixed $i=1$);
\item an ordering of the poles. 
\end{itemize}
\end{remark}
\begin{remark} For $n>1$,  formula~\eqref{for:indres} involves graphs such that $\M_{-1}$ is of positive dimension, while it would be desirable to have an alternative expression on boundary components with $\oM_{-1}$ of dimension 0. For $\rho=(2)$, these graphs index cylinder configurations, and the re-organization of~\eqref{for:indres} into an expression involving only cylinder configurations is provided by~\cite{IttSau}. 
\end{remark}

From now on, we impose $n=1$. Then the function $\widehat{h}_{\PP^1}$ has a simple expression found in~\cite{CosSauSch} (and by alternative method in~\cite{BurRos}), i.e.
$$
\widehat{h}_{\PP^1}((k_1),(p_1,\ldots,p_m))= (m-2)! \prod_{i=1}^{m-2} p_i
$$
Using this expression we will re-write formula~\eqref{for:indres}. We separate the cases.

\subsubsection*{Case $\rho_\ell=2$} Under this assumption the vector $\mu[\infty]$ in~\eqref{for:indres} is $(b(e))_{e\in E}+(0,0)$, and we re-write:
\begin{eqnarray*}
a((k),\rho) &=& \!\!\!\!\!\!\!\!\!\!\sum_{(\Gamma,b)\in {\rm ASBB}(\mu,\rho)_1} \!\!\!\!\!\frac{m(\Gamma,b)^2\times |E(\Gamma)|!}{|{\rm Aut}(\Gamma,b)|}  \prod_{v\in V_0} a(\mu(v),\rho(v)).
\end{eqnarray*}
Then we can re-write this expression as
\begin{eqnarray*}
\frac{a((k),\rho)}{|{\rm Aut}(\rho')|} &=& \sum_{r\geq 1}  \underset{\rho^1+\ldots + \rho^r ={\rho'}}{\sum_{k_1+\ldots+k_r=k+1-r}} \prod_{i=1}^r \frac{k_i^2 a((k_i) ,\rho^{i})}{|{\rm Aut}(\rho^{i})|},
\end{eqnarray*}
This expression for a single choice of $\rho'$. As we sum over all  choices of $\rho'$, we obtain an expression of $\frac{\partial}{\partial t_2}\a((k))=\sum_{\rho'} \frac{a((k),\rho'+(2))}{|{\rm Aut}(\rho')|}$:
\begin{eqnarray*}
\frac{\partial}{\partial t_2}\a((k)) &=& \sum_{r\geq 1}  {\sum_{k_1+\ldots+k_r=k-r-1}} \prod_{i=1}^r k_i^2\a((k_i)),
\end{eqnarray*}
which is equivalent to the PDE~\eqref{for:thm31}. 

\subsubsection*{Case $\rho_\ell>2$} For this second case, the vector $\mu[\infty]$ in~\eqref{for:indres} is the vector of integers $(b(e))_{e\in E\setminus\{e_0\}}+(b(e_0),0)$, and one vertex $v_0\in V_0$ is distinguished from the others as it carries the last legs. Then we have the following recursion:
\begin{eqnarray*}
\frac{a((k),\rho)}{|{\rm Aut}(\rho')|} &=& \sum_{r\geq 0}  \underset{\rho^0+\ldots + \rho^r ={\rho'}}{\sum_{k_0+k_1+\ldots+k_r=k-r}} \frac{k_0 a((k_0) ,\rho^{0}+(\rho_\ell-1))}{|{\rm Aut}(\rho^{0})|} \prod_{i=1}^r \frac{k_i^2 a((k_i) ,\rho_{i})}{|{\rm Aut}(\rho^{i})|},
\end{eqnarray*}
For a fixed value of $\rho_\ell$, if we sum over all choices of $\rho'$, then we obtain
\begin{eqnarray*}
\frac{\partial}{\partial t_{\rho_\ell}}\a((k)) &=& \sum_{r\geq 0}  {\sum_{k_0+k_1+\ldots+k_r=k-r}} k_0 \frac{\partial}{\partial t_{\rho_\ell}}\a((k_0)) \prod_{i=1}^r k_i^2 \a((k_i)),
\end{eqnarray*}
which is equivalent to the PDE~\eqref{for:thm32}.


\begin{thebibliography}{BCG{\etalchar{+}}18}

\bibitem[Agg20]{Agg}
Amol Aggarwal.
\newblock Large genus asymptotics for volumes of strata of {A}belian
  differentials.
\newblock {\em J. Amer. Math. Soc.}, 33(4):941--989, 2020.
\newblock With an appendix by Anton Zorich.

\bibitem[AMPS24]{oberwolfachMasur}
David Aulicino, Howard Masur, Huiping Pan, and Weixu Su.
\newblock Counting saddle connections on meromorphic abelian differentials.
\newblock Oberwolfach workshop report : ``Riemann Surfaces: Random, Flat, and
  Hyperbolic Geometry'' (41), 2024.

\bibitem[BCG{\etalchar{+}}18]{BCGGM}
Matt Bainbridge, Dawei Chen, Quentin Gendron, Samuel Grushevsky, and Martin
  M\"{o}ller.
\newblock Compactification of strata of {A}belian differentials.
\newblock {\em Duke Math. J.}, 167(12):2347--2416, 2018.

\bibitem[BR16]{BurRos}
Alexandr Buryak and Paolo Rossi.
\newblock Recursion relations for double ramification hierarchies.
\newblock {\em Commun. Math. Phys.}, 342(2):533--568, 2016.

\bibitem[BR24]{BurRoscount}
Alexandr Buryak and Paolo Rossi.
\newblock Counting meromorphic differentials on {{\(\mathbb{CP}^1\)}}.
\newblock {\em Lett. Math. Phys.}, 114(4):27, 2024.
\newblock Id/No 97.

\bibitem[CMSZ20]{CMSZ}
Dawei Chen, Martin M\"{o}ller, Adrien Sauvaget, and Don Zagier.
\newblock Masur-{V}eech volumes and intersection theory on moduli spaces of
  {A}belian differentials.
\newblock {\em Invent. Math.}, 222(1):283--373, 2020.

\bibitem[CP25]{ChePra}
Dawei Chen and Miguel Prado.
\newblock Counting differentials with fixed residues.
\newblock {\em Lett. Math. Phys.}, 115(3):26, 2025.
\newblock Id/No 53.

\bibitem[CSS21]{CosSauSch}
Matteo Costantini, Adrien Sauvaget, and Johannes Schmitt.
\newblock Integrals of ${{\psi}}$-classes on twisted double ramification cycles
  and spaces of differentials.
\newblock Preprint, {arXiv}:2112.04238 [math.{AG}], 2021.

\bibitem[EM01]{EskMas}
Alex Eskin and Howard Masur.
\newblock Asymptotic formulas on flat surfaces.
\newblock {\em Ergodic Theory Dynam. Systems}, 21(2):443--478, 2001.

\bibitem[EMZ03]{EskMasZor}
Alex Eskin, Howard Masur, and Anton Zorich.
\newblock Moduli spaces of abelian differentials: the principal boundary,
  counting problems, and the {S}iegel-{V}eech constants.
\newblock {\em Publ. Math. Inst. Hautes \'Etudes Sci.}, (97):61--179, 2003.

\bibitem[EZ15]{EskZor}
Alex Eskin and Anton Zorich.
\newblock Volumes of strata of {A}belian differentials and {S}iegel-{V}eech
  constants in large genera.
\newblock {\em Arnold Math. J.}, 1(4):481--488, 2015.

\bibitem[Mas82]{Mas}
Howard Masur.
\newblock Interval exchange transformations and measured foliations.
\newblock {\em Ann. of Math. (2)}, 115(1):169--200, 1982.

\bibitem[Mum83]{Mum}
David Mumford.
\newblock Towards an enumerative geometry of the moduli space of curves.
\newblock In {\em Arithmetic and geometry, {V}ol. {II}}, volume~36 of {\em
  Progr. Math.}, pages 271--328. Birkh\"auser Boston, Boston, MA, 1983.

\bibitem[Sau18]{SauGAFA}
Adrien Sauvaget.
\newblock Volumes and {S}iegel-{V}eech constants of {$\mathcal{H}(2g-2)$} and
  {H}odge integrals.
\newblock {\em Geom. Funct. Anal.}, 28(6):1756--1779, 2018.

\bibitem[Sau19]{SauGT}
Adrien Sauvaget.
\newblock Cohomology classes of strata of differentials.
\newblock {\em Geom. Topol.}, 23(3):1085--1171, 2019.

\bibitem[Sau20]{SauFlat}
Adrien Sauvaget.
\newblock Volumes of moduli spaces of flat surfaces.
\newblock Preprint, {arXiv}:2004.03198 [math.{AG}], 2020.

\bibitem[Sau21]{SauIRMN}
Adrien Sauvaget.
\newblock The large genus asymptotic expansion of {M}asur-{V}eech volumes.
\newblock {\em Int. Math. Res. Not. IMRN}, (20):15894--15910, 2021.

\bibitem[Vee82]{Vee}
William~A. Veech.
\newblock Gauss measures for transformations on the space of interval exchange
  maps.
\newblock {\em Ann. of Math. (2)}, 115(1):201--242, 1982.

\bibitem[vIS22]{IttSau}
Jan-Willem van Ittersum and Adrien Sauvaget.
\newblock Cylinder counts and spin refinement of area {Siegel}-{Veech}
  constants.
\newblock Preprint, {arXiv}:2210.17374 [math.{AG}], 2022.

\end{thebibliography}
\newcommand{\etalchar}[1]{$^{#1}$}

\end{document}